\documentclass[10pt,oneside]{amsart}
  
\pagestyle{plain}  
\usepackage{geometry} 
\usepackage{amsmath, amsthm, amssymb}  
\geometry{letterpaper} 
\usepackage{hyperref}   
\usepackage{lipsum} 
\usepackage{scrextend} 
\hypersetup{colorlinks=true,linkcolor=blue, linktocpage}
\usepackage{graphicx} 
\usepackage{mathrsfs} 
\usepackage{color} 
\usepackage{verbatim}
\usepackage[bbgreekl]{mathbbol}
\usepackage[dvipsnames]{xcolor}
\usepackage{enumitem}
\usepackage{graphicx}

\usepackage{geometry}
 \geometry{ 
 a4paper,
 left=30mm,
 right=30mm,
 top=25mm,
 bottom=30mm,
 }
 
\makeatletter
\@addtoreset{equation}{section}
\makeatother

\usepackage[all]{xy}
\usepackage{hyperref}
\title{A note on 3-subgroups in the space Cremona group}
\author{Konstantin Loginov}
\date{} 

\newcounter{cthm}

\newtheorem{proposition}[equation]{Proposition}
\newtheorem{thm}[equation]{Theorem} 

\newtheorem{corollary}[equation]{Corollary}
\newtheorem{lem}[equation]{Lemma}
\theoremstyle{definition}

\newtheorem{setting}[equation]{Setting}
\newtheorem{question}[equation]{Question}

\newtheorem{exam}[equation]{Example}
\theoremstyle{exam}

\newcommand{\OOO}{\mathscr{O}}


\newcommand{\Addresses}{{
  \bigskip
  \footnotesize
  
  \  
    \
   
  \textsc{Steklov Mathematical Institute of Russian Academy of Sciences, 8 Gubkina st., Moscow, Russia, 119991. }\\
  \textit{E-mail:} \texttt{loginov@mi-ras.ru}
}}
\begin{document}

\maketitle

\begin{abstract}
We prove that a finite $3$-group in the Cremona group $\mathrm{Cr}_3(\mathbb{C})$ can be generated by at most $4$ elements. This provides the last missing piece in bounding the ranks of finite $p$-subgroups in the space Cremona group.
\end{abstract}

\section{Introduction}
We work over the field $\mathbb{C}$ of complex numbers. The \emph{Cremona group} $\mathrm{Cr}_n(\mathbb{C})$ is the group of birational self-maps of the $n$-dimensional complex projective space $\mathbb{P}^n$. For $n=1$, this group is isomorphic to $\mathrm{PGL}(2, \mathbb{C})$. In contrast, for $n\geq 2$ the Cremona group becomes a very complicated object to work with. One way to understand its structure is by means of its finite subgroups. Such subgroups were classified for $n=2$ by I. Dolgachev and V. Iskovskikh, see \cite{DI09}. In dimension $3$, the situation is more delicate, and the complete classification seems to be out of reach. However, there exist classificational results for some classes of groups, see e.g. \cite{Pr09} in the case of simple groups. There are also numerous boundedness results for finite subgroups of the Cremona group, see \cite{PS16} and references therein.

We concentrate on one particular type of such subgroups, namely, the $p$-groups. By a \emph{$p$-group} we mean a finite group of order $p^k$ where $p$ is a prime number. A rank $r(G)$ of a $p$-group $G$ is defined as the minimal number of elements that generate $G$. The following problem is natural: estimate the rank of a $p$-subgroup in Cremona group $\mathrm{Cr}_n(\mathbb{C})$, cf. \cite{Ser09}. The complete answer to this question for $n=2$ was obtained by A. Beauville, see Theorem~\ref{thm-surfaces}. 

In higher dimensions, one can consider the group of birational self-maps of a rationally connected variety of a given dimension, and ask for a similar bound for the rank of its $p$-subgroups. For $n=3$, the sharp bounds were obtained for $p=2$ and $p\geq 5$, see Theorem \ref{thm-threefolds}. For $n=3$, $p=3$ in the work \cite{Kuz20} the bound $r(G)\leq 4$ was obtained modulo $3$ exceptional cases. In this work, we prove the following

\begin{thm}
\label{main-theorem}
Let $X$ be a $3$-dimensional rationally connected variety, and let $G\subset \mathrm{Bir}(X)$ be a finite $3$-group. Then $r(G)\leq 4$, and this bound is sharp. 
\end{thm}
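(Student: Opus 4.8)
The plan is to run the $G$-equivariant Minimal Model Program, reduce to a $G$-Mori fiber space, and bound the rank case by case, isolating the genuinely hard cases that remain after \cite{Kuz20}. First I would regularize the action: since $G$ is finite there is a smooth projective variety $Y$, birational to $X$ and again rationally connected, carrying a faithful regular $G$-action. Running the $G$-MMP produces (after replacing $Y$) a $G$-Mori fiber space $f\colon Y\to Z$ with $Y$ terminal and $\mathbb{Q}$-factorial, $\rho^G(Y/Z)=1$, and $Z$ rationally connected of dimension $0$, $1$ or $2$. Since the minimal number of generators of a $p$-group is subadditive, $r(G)\le r(N)+r(G/N)$ for $N\trianglelefteq G$, the positive-dimensional base cases reduce to lower-dimensional bounds.

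For $\dim Z=2$ the general fiber is a conic, so the kernel $G_F$ of $G\to\mathrm{Aut}(Z)$ acts on the geometric generic fiber $\mathbb{P}^1$, hence embeds into $\mathrm{PGL}_2$ and is cyclic, giving $r(G_F)\le 1$; the image is a $3$-group acting on a rational surface, of rank at most $3$ by Theorem~\ref{thm-surfaces}, so $r(G)\le 4$. For $\dim Z=1$ one has $Z=\mathbb{P}^1$, the image in $\mathrm{PGL}_2$ is cyclic, and $G_F$ acts faithfully on the del Pezzo surface $Y_{\bar\eta}$ over $\overline{\mathbb{C}(t)}$, so $r(G_F)\le 3$ again by Theorem~\ref{thm-surfaces}; hence $r(G)\le 4$.

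The decisive case is $\dim Z=0$, where $Y$ is a $G$-Fano threefold with terminal singularities and $\rho^G(Y)=1$. The clean sub-case is when $G$ fixes a smooth point $y\in Y$: the faithful action on $T_yY$ gives $G\hookrightarrow\mathrm{GL}_3(\mathbb{C})$, and decomposing $\mathbb{C}^3$ into irreducibles of dimension $1$ or $3$ shows that a finite $3$-subgroup of $\mathrm{GL}_3(\mathbb{C})$ has rank at most $3$ (the only nonabelian possibility being a Heisenberg-type group, of rank $2$ modulo its cyclic center). The difficulty is exactly the fixed-point-free actions, and here I would invoke the classification underlying \cite{Kuz20}: for all but three exceptional families of $G$-Fano threefolds the bound $r(G)\le 4$ is already established there. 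The substance of the note is therefore to dispose of these three families.

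For each exceptional $Y$ I would produce a $G$-equivariant structure that either linearizes the action or drops the dimension: a $G$-fixed point on an auxiliary $G$-variety (the anticanonical system $|-K_Y|$, the Hilbert schemes of lines or conics, or the singular locus), a $G$-invariant pencil or fibration, or a $G$-invariant surface; each feeds into the subadditivity estimate together with Theorem~\ref{thm-surfaces} to yield $r(G)\le 4$. When no such locus is available I would instead compute the Sylow $3$-subgroup of $\mathrm{Aut}(Y)$ directly through the embedding $\mathrm{Aut}(Y)\hookrightarrow\mathrm{PGL}\bigl(H^0(Y,-mK_Y)\bigr)$ and the block structure of $3$-subgroups of $\mathrm{PGL}$. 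Sharpness is witnessed by the Fermat cubic threefold $\{\sum_{i=0}^{4}x_i^3=0\}\subset\mathbb{P}^4$, on which the diagonal group $(\mu_3)^5/\mu_3\cong(\mathbb{Z}/3)^4$ acts with $r(G)=4$. I expect the main obstacle to be precisely the fixed-point-free exceptional families: the absence of an evident invariant locus forces a delicate analysis of the geometry and singularities of $Y$ to locate a usable $G$-invariant structure.
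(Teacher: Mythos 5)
Your reduction to a $G\mathbb{Q}$-Mori fiber space, your treatment of the positive-dimensional base via subadditivity of rank and Theorem~\ref{thm-surfaces}, and your observation that a fixed point linearizes the action and forces $r(G)\le 3$ all match the paper (Lemma~\ref{lem-action-on-MFS} and Proposition~\ref{invariant-point-bound}). The gap is in the only case that actually matters: a fixed-point-free action on a terminal $G\mathbb{Q}$-Fano threefold. There you propose to invoke the classification behind \cite{Kuz20} and then ``dispose of the three exceptional families'' by finding some invariant locus or by computing Sylow $3$-subgroups of $\mathrm{Aut}(Y)$ inside $\mathrm{PGL}(H^0(Y,-mK_Y))$. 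This is not an argument but a list of hoped-for strategies: you do not identify the exceptional families, you do not exhibit the invariant structure in any of them, and the fallback ``compute $\mathrm{Aut}(Y)$ directly'' presupposes exactly the explicit knowledge of $Y$ that is missing in those cases. Since the whole content of the theorem beyond \cite{Kuz20} is precisely these residual cases, the proposal leaves the core of the proof unproved.

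The paper's route through the Fano case is different and classification-free, and it is worth recording what it actually does. First, one reduces to \emph{abelian} $G$ via the Frattini quotient (Proposition~\ref{reduction-to-abelian}); this is not a cosmetic step, because for abelian $G$ the representation on $H^0(X,\OOO_X(-K_X))$ splits into characters and hence yields a $G$-invariant pencil whenever $\dim|-K_X|\ge 1$. Orbifold Riemann--Roch (Proposition~\ref{rank-5-dichotomy}) shows that for $r(G)=5$ either $(-K_X)^3=1/2$ with $\dim|-K_X|=0$, or $\dim|-K_X|\ge 1$. In the first case the unique anticanonical member is shown to be lc, reduced and irreducible, and essentially a K3 or log Calabi--Yau surface carrying a faithful $3$-group of rank $4$, contradicting Nikulin's bound (Propositions~\ref{canonical-K3} and~\ref{non-canonical-logCY}). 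In the second case the invariant anticanonical pencil is fed into the MMP for linear systems \`a la Alexeev (Proposition~\ref{prop-main-reduction}): either one lands on a Mori fiber space with positive-dimensional base (already excluded), or one obtains a K3 fibration over $\mathbb{P}^1$ whose general fiber again carries a rank-$4$ group, a contradiction. If you want to salvage your outline, this anticanonical-system argument is the missing ingredient; as written, your proposal does not close the case on which the theorem turns. (Your sharpness example, the Fermat cubic threefold with $(\mu_3)^5/\mu_3$, does work for the rationally connected statement, though the paper uses the rational variety $S\times\mathbb{P}^1$ so that the same example also settles Corollary~\ref{main-corollary}.)
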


\begin{corollary}
\label{main-corollary}
Let $G\subset \mathrm{Cr}_3 (\mathbb{C})$ be a finite $3$-group. Then $r(G)\leq 4$, and this bound is sharp. 
\end{corollary}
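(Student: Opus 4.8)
The upper bound in the Corollary is immediate from Theorem~\ref{main-theorem}: the projective space $\mathbb{P}^3$ is rational, hence rationally connected, and $\mathrm{Cr}_3(\mathbb{C}) = \mathrm{Bir}(\mathbb{P}^3)$, so any finite $3$-subgroup $G \subset \mathrm{Cr}_3(\mathbb{C})$ satisfies $r(G) \le 4$. The substance therefore lies in (i) the bound of Theorem~\ref{main-theorem} itself and (ii) exhibiting a finite $3$-subgroup of rank exactly $4$. I sketch a proof of (i) and then carry out (ii), taking care that the extremal group lives on a \emph{rational} (not merely rationally connected) threefold so that it actually sits inside $\mathrm{Cr}_3(\mathbb{C})$.

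For (i), I would first replace $X$ by a smooth projective $G$-variety and run a $G$-equivariant Minimal Model Program to reach a $G$-Mori fiber space $\pi\colon Y \to B$, where $Y$ has terminal $\mathbb{Q}$-factorial singularities, $-K_Y$ is $\pi$-ample, $\rho^G(Y/B)=1$, and $B$ is rationally connected of dimension $0$, $1$ or $2$. Writing $1 \to G_\eta \to G \to G_B \to 1$ for the action on the generic fiber and on the base, the minimal-number-of-generators function is subadditive in extensions, so $r(G) \le r(G_\eta) + r(G_B)$; the whole point is to bound the two factors by the lower-dimensional results.

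If $\dim B = 2$, then $\pi$ is a conic bundle: the geometric generic fiber is $\mathbb{P}^1$, so $G_\eta$ is a finite $3$-subgroup of $\mathrm{PGL}_2$ and hence cyclic, whence $r(G_\eta)\le 1$; the base $B$ is a rational surface, so $G_B \subset \mathrm{Bir}(B)\cong \mathrm{Cr}_2(\mathbb{C})$ gives $r(G_B)\le 3$ by the surface bound (Theorem~\ref{thm-surfaces}), and $r(G)\le 4$. If $\dim B = 1$, then $B\cong \mathbb{P}^1$ and $r(G_B)\le 1$ as above, while the geometric generic fiber is a del Pezzo surface over $\overline{\mathbb{C}(t)}$; since that field is abstractly isomorphic to $\mathbb{C}$, $G_\eta$ is again a $3$-subgroup of $\mathrm{Cr}_2(\mathbb{C})$ with $r(G_\eta)\le 3$, and $r(G)\le 4$. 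The remaining and genuinely hard case is $\dim B = 0$: then $Y$ is a $G\mathbb{Q}$-Fano threefold with $\rho^G(Y)=1$ and terminal singularities, and one must bound the $3$-rank of the relevant finite subgroup of $\mathrm{Aut}(Y)$ directly. By \cite{Kuz20} this rank is at most $4$ apart from a short explicit list of exceptional families; the main obstacle is to dispose of exactly these, which I would do by writing down $\mathrm{Aut}(Y)$ from the concrete model of $Y$ (as a hypersurface or complete intersection in a weighted projective or toric ambient space), choosing a Sylow $3$-subgroup, and computing its Frattini quotient to verify $r\le 4$. This $G\mathbb{Q}$-Fano step, not the fibered ones, is where all the difficulty is concentrated.

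For (ii) and sharpness, let $S_3\subset \mathbb{P}^3$ be the Fermat cubic surface $\{x_0^3+x_1^3+x_2^3+x_3^3=0\}$. The diagonal substitutions $x_i \mapsto \zeta^{a_i}x_i$ with $\zeta=e^{2\pi i/3}$ preserve $S_3$ and, modulo scalars, form a subgroup $(\mathbb{Z}/3)^3 \subset \mathrm{Aut}(S_3)$. Taking the product with a generator of $\mathbb{Z}/3\subset \mathrm{PGL}_2=\mathrm{Aut}(\mathbb{P}^1)$ yields $(\mathbb{Z}/3)^4 \subset \mathrm{Aut}(\mathbb{P}^1\times S_3)$, a faithful action with minimal number of generators equal to $4$. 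Since $S_3$ is rational, $\mathbb{P}^1\times S_3$ is rational, so $(\mathbb{Z}/3)^4 \hookrightarrow \mathrm{Bir}(\mathbb{P}^1\times S_3)\cong \mathrm{Cr}_3(\mathbb{C})$, proving that the bound $r(G)\le 4$ is attained. In the merely rationally connected setting of Theorem~\ref{main-theorem} the same rank is realized even more directly by the diagonal $(\mathbb{Z}/3)^4\subset \mathrm{Aut}$ of the Fermat cubic threefold $\sum_{i=0}^4 x_i^3=0$, which is rationally connected but not rational.
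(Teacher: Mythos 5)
Your proof of the Corollary itself is exactly the paper's: the upper bound is immediate from Theorem~\ref{main-theorem} applied to the rationally connected variety $\mathbb{P}^3$, and sharpness comes from the rank-$4$ group $(\mathbb{Z}/3)^3\times\mathbb{Z}/3$ acting on $S\times\mathbb{P}^1$ with $S$ the Fermat cubic surface, which lies in $\mathrm{Cr}_3(\mathbb{C})$ precisely because $S\times\mathbb{P}^1$ is rational (this is the paper's Example~\ref{exam-3-sharp}). The embedded sketch of a proof of Theorem~\ref{main-theorem} is superfluous for the Corollary and, in the crucial $\dim B=0$ case, defers to the classification in \cite{Kuz20} plus an unspecified case-by-case computation of automorphism groups --- which is exactly the route the paper deliberately avoids (it instead analyses the anticanonical system and reduces to $3$-group actions on K3 and log Calabi--Yau surfaces); but since Theorem~\ref{main-theorem} may legitimately be cited as given, this does not affect the validity of your argument for the Corollary.
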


As a consequence, we can formulate the following theorem which provides sharp bounds for the ranks of $p$-subgroups acting on rationally connected threefolds.

\begin{thm}[{cf. \cite{Pr11}, \cite{Pr14}, \cite{PS17}, \cite{Kuz20}, \cite{Xu20}}]
\label{thm-threefolds}
Let $X$ be a rationally connected variety of dimension $3$, and let $G\subset \mathrm{Bir}(X)$ be a finite $p$-group. Then it can be generated by at most $r$ elements where 
\begin{itemize}
\item
$r=6$ when $p=2$,
\item
$r=4$ when $p=3$, 
\item
$r=3$ when $p\geq 5$. 
\end{itemize}
Moreover, these bounds are sharp.
\end{thm}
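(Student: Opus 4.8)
The plan is to establish the three bounds prime by prime, reducing in each case to the structure theory of $G$-equivariant minimal models, and to isolate $p=3$ as the only case requiring new input. Given a finite $p$-group $G\subset\mathrm{Bir}(X)$ acting on a rationally connected threefold $X$, I would first regularize the action: by $G$-equivariant resolution of indeterminacy followed by a $G$-equivariant MMP one obtains a $G$-Mori fibre space $\pi\colon Y\to S$ with $Y$ terminal, $\rho^G(Y/S)=1$, and $G$ acting biregularly. Since the rank $r(G)$ is an invariant of the abstract group, it suffices to bound $r(G)$ as $Y$ ranges over all such $G$-Mori fibre spaces.

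The base $S$ has dimension $0$, $1$ or $2$. If $\dim S=2$ then $\pi$ is a conic bundle over a rational surface; if $\dim S=1$, rational connectedness forces $S\cong\mathbb{P}^1$ and $\pi$ is a del Pezzo fibration; and if $\dim S=0$ then $Y$ is a terminal $G$-Fano threefold. In the fibred cases the map $\pi$ produces an exact sequence
$$1\longrightarrow G_{\mathrm{fib}}\longrightarrow G\longrightarrow G_S\longrightarrow 1,$$
where $G_S\subset\mathrm{Bir}(S)$ is the induced group and $G_{\mathrm{fib}}$ acts on the generic fibre, whence $r(G)\le r(G_{\mathrm{fib}})+r(G_S)$. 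I would bound $G_S$ using lower-dimensional results: a $p$-subgroup of $\mathrm{PGL}_2(\mathbb{C})$ when $S\cong\mathbb{P}^1$, and Beauville's sharp surface bound (Theorem~\ref{thm-surfaces}) when $S$ is a rational surface; and I would bound $G_{\mathrm{fib}}$ through the $p$-ranks of the groups acting on a conic and on a del Pezzo surface over the function field of the base. The case $\dim S=0$ instead requires bounding the $p$-rank of the automorphism group of a terminal Fano threefold directly.

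Taking the maximum over the three strata for each prime yields $r\le 6$ for $p=2$, $r\le 4$ for $p=3$, and $r\le 3$ for $p\ge 5$. For $p=2$ and $p\ge 5$ these are exactly the sharp estimates already established in \cite{Pr11,Pr14,PS17,Xu20}, so for those primes the theorem amounts to a quotation. The only genuinely new ingredient is $p=3$: the analysis of \cite{Kuz20} gives $r(G)\le 4$ outside three exceptional configurations, and Theorem~\ref{main-theorem} of the present paper disposes of precisely these. The main obstacle is therefore this $p=3$ analysis, which is the technical heart of the paper rather than of the present assembling argument, since the $3$-adic phenomena (an extra generator beyond the naive toric count) are exactly what the exceptional cases encode.

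It remains to verify sharpness, which I would do by exhibiting explicit actions on rational threefolds. For $p=2$ take $(\mathbb{P}^1)^3$ with the product of the Klein four-subgroups of the three copies of $\mathrm{PGL}_2(\mathbb{C})$, realizing $(\mathbb{Z}/2)^6$ of rank $6$. For $p\ge 5$ take $(\mathbb{P}^1)^3$ with the diagonal $(\mathbb{Z}/p)^3$, of rank $3$. For $p=3$ take the product $S\times\mathbb{P}^1$, where $S$ is the Fermat cubic surface $\{x_0^3+x_1^3+x_2^3+x_3^3=0\}\subset\mathbb{P}^3$: its diagonal automorphisms $x_i\mapsto\omega^{a_i}x_i$ (with $\omega$ a primitive cube root of unity) give $(\mathbb{Z}/3)^3\subset\mathrm{Aut}(S)$, which together with a $\mathbb{Z}/3\subset\mathrm{PGL}_2(\mathbb{C})$ on the second factor produce an elementary abelian group $(\mathbb{Z}/3)^4$ of rank $4$ acting on a rational threefold. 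These examples match the upper bounds for all primes, proving sharpness.
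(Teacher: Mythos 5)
Your proposal is correct and follows essentially the same route as the paper: Theorem~\ref{thm-threefolds} is an assembly statement whose proof consists of quoting the established sharp bounds for $p=2$ and $p\geq 5$ from \cite{Pr11}, \cite{Pr14}, \cite{PS17}, \cite{Xu20}, invoking Theorem~\ref{main-theorem} for the new case $p=3$, and exhibiting the same sharpness examples (your $(\mathbb{Z}/2)^6$ on $(\mathbb{P}^1)^3$ and $(\mathbb{Z}/3)^4$ on the Fermat cubic times $\mathbb{P}^1$ are exactly Examples~\ref{exam-2-sharp} and~\ref{exam-3-sharp}, and your $(\mathbb{Z}/p)^3$ on $(\mathbb{P}^1)^3$ is a harmless variant of the paper's action on $\mathbb{P}^3$). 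The only minor inaccuracy is your description of Theorem~\ref{main-theorem} as merely ``disposing of'' the three exceptional configurations of \cite{Kuz20}; the paper in fact gives an independent proof of the full $p=3$ bound, but this does not affect the logic.
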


In the assumptions of Theorem \ref{thm-threefolds}, from \cite{Xu20} it follows that for $p\geq 5$ the group $G$ is abelian. For the discussion of analogous statements in the case $p=2, 3$ see Example \ref{exam-non-abelian} and Question \ref{question-abelian}. The claim that this bound is sharp follows from Example \ref{exam-2-sharp} for $p=2$, Example \ref{exam-3-sharp} for $p=3$ and the natural action of $(\mathbb{Z}/p)^3$ on $\mathbb{P}^3$ for $p\geq 5$. Let us formulate an analogous statement in dimension $2$.

\begin{thm}[{\cite{Beau07}}]
\label{thm-surfaces}
Let $X$ be a rational surface, and let $G\subset \mathrm{Bir}(X)$ be a finite abelian $p$-group. Then it can be generated by at most $r$ elements where 
\begin{itemize}
\item
$r=4$ when $p=2$, 
\item
$r=3$ when $p=3$, 
\item
$r=2$ when $p\geq 5$. 
\end{itemize}
Moreover, these bounds are sharp.
\end{thm}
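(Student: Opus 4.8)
The plan is to translate the rank bound into a statement about $p$-elementary subgroups and then to run the equivariant minimal model program. First I would note that for a finite abelian $p$-group $G$ the minimal number of generators equals $\dim_{\mathbb{F}_p} G[p]$, where $G[p]\cong(\mathbb{Z}/p)^{r(G)}$ is the subgroup of $p$-torsion; hence it is enough to find the largest $r$ such that $(\mathbb{Z}/p)^{r}$ embeds into $\mathrm{Bir}(X)$. Since $X$ is rational, $\mathrm{Bir}(X)\cong\mathrm{Cr}_2(\mathbb{C})$, and by the theorem of Manin and Iskovskikh the group $(\mathbb{Z}/p)^{r}$ is conjugate to a group acting biregularly on a smooth projective rational surface $Y$. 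Running the $G$-equivariant MMP produces a $G$-minimal model, which is either a del Pezzo surface with $\mathrm{Pic}(Y)^{G}\cong\mathbb{Z}$ or a $G$-conic bundle $\pi\colon Y\to\mathbb{P}^{1}$ with $\mathrm{Pic}(Y)^{G}\cong\mathbb{Z}^{2}$. It then suffices to bound $r$ in each of these two cases.

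In the conic bundle case the action fits into an exact sequence $1\to G_{0}\to G\to G_{B}\to 1$, where $G_{B}\subseteq\mathrm{Aut}(\mathbb{P}^{1})=\mathrm{PGL}_2(\mathbb{C})$ records the action on the base and $G_{0}$ fixes the base and preserves every fiber. The maximal elementary abelian $p$-subgroup of $\mathrm{PGL}_2(\mathbb{C})$ has rank $2$ for $p=2$ (a Klein four-group) and rank $1$ for $p$ odd, which bounds $G_{B}$. The group $G_{0}$ acts faithfully on the generic fiber, a conic over $\mathbb{C}(\mathbb{P}^{1})$, hence embeds into $\mathrm{PGL}_2$ of that field; as its nontrivial elements are semisimple of order $p$, commuting ones lie in a common maximal torus, so one bounds the rank of $G_{0}$ by $2$ for $p=2$ and by $1$ for $p$ odd. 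Combining the two contributions yields $r\le 4$ for $p=2$ and $r\le 2$ for $p\ge 3$.

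For the del Pezzo case I would invoke the classification of automorphism groups of del Pezzo surfaces of degree $1\le d\le 9$ (e.g. from \cite{DI09}) and read off, degree by degree, the largest $p$-elementary abelian rank. For $d\ge 6$ the connected part $\mathrm{Aut}^{0}(Y)$ contains a torus of rank at most $2$ and the finite part is controlled by the Weyl group of the corresponding root system, so the $p$-rank is at most $2$ except in the presence of extra torsion; the cases $d\le 4$, where $\mathrm{Aut}(Y)$ is finite but large, must be examined individually. This analysis also produces the extremal examples: for $p=2$ the group $(\mathbb{Z}/2)^{4}$, realized as the product of two Klein four-groups in $\mathrm{Aut}(\mathbb{P}^{1}\times\mathbb{P}^{1})$ (equivalently as the sign changes on a degree $4$ del Pezzo surface); for $p=3$ the group $(\mathbb{Z}/3)^{3}$ of diagonal automorphisms, modulo scalars, of the Fermat cubic surface $\{x_0^3+x_1^3+x_2^3+x_3^3=0\}\subset\mathbb{P}^{3}$; and for $p\ge 5$ the diagonal $(\mathbb{Z}/p)^{2}\subset\mathrm{PGL}_3(\mathbb{C})$ acting on $\mathbb{P}^{2}$. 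These give the matching lower bounds $r\ge 4,3,2$, proving sharpness.

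The hard part will be the explicit del Pezzo analysis in low degree, together with the fiberwise bound on $G_{0}$ in the conic bundle case. One must verify that no del Pezzo surface of degree $\le 4$ carries an elementary abelian $p$-group of rank exceeding the claimed bound --- in particular that the Fermat cubic is extremal for $p=3$ and that degree $4$ (or $\mathbb{P}^{1}\times\mathbb{P}^{1}$) is extremal for $p=2$ --- and that the fiberwise part of a conic bundle cannot secretly raise the rank. These are finite but delicate checks, and they are where the content of the theorem resides.
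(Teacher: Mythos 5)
First, note that the paper itself does not prove this statement: Theorem \ref{thm-surfaces} is imported wholesale from Beauville's paper \cite{Beau07} (the non-abelian case being handled separately via Proposition \ref{reduction-to-abelian}), so there is no internal proof to compare yours against. Your outline does reproduce the standard strategy --- reduce to $p$-elementary subgroups via $G[p]$, regularize the action on a smooth rational surface, run the $G$-MMP, and split into the conic-bundle and del Pezzo cases --- and the conic-bundle half is essentially complete as written: $G_B$ lands in $\mathrm{PGL}_2(\mathbb{C})$ and $G_0$ in $\mathrm{PGL}_2(\mathbb{C}(t))$ (the generic fiber is $\mathbb{P}^1$ over $\mathbb{C}(t)$ by Tsen's theorem), and the rank bounds $2+2$ for $p=2$ and $1+1$ for $p$ odd are correct. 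The sharpness examples you give are also the right ones.

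The genuine gap is the del Pezzo case, which you explicitly defer (``finite but delicate checks \dots where the content of the theorem resides''). As it stands this is not a proof: nothing in your text actually rules out, say, $(\mathbb{Z}/3)^4$ on a cubic surface or on a del Pezzo surface of degree $1$ or $2$, or $(\mathbb{Z}/2)^5$ in low degree, and your heuristic ``torus of rank $\le 2$ plus Weyl-group torsion'' is not a bound (indeed $\mathbb{P}^1\times\mathbb{P}^1$ already carries $(\mathbb{Z}/2)^4$, exceeding it). To close the gap you would either have to carry out the degree-by-degree verification against the tables of \cite{DI09}, or follow Beauville's actual route, which avoids the brute-force classification: he studies the exact sequence $1\to G'\to G\to W(E_{9-d})$ given by the action on $\mathrm{Pic}(Y)$, bounds the $p$-rank of the relevant Weyl groups, observes that the kernel $G'$ acts by automorphisms linearizable on the anticanonical model, and for $p\ge 5$ (and partly $p=3$) uses a fixed-point argument to embed $G$ into $\mathrm{GL}_2(\mathbb{C})$ at a fixed point, giving rank $\le 2$ directly. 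Either way, the case analysis you postpone is the theorem; the part you wrote out is the easy half.
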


The assumption that $G$ is abelian can be removed from Theorem \ref{thm-surfaces} due to Proposition \ref{reduction-to-abelian}. In fact, Beauville proves that if the bounds as above are attained then one can describe the conjugacy class of the given subgroup. It would be interesting to obtain analogous description in the $3$-dimensional case, cf. Question 1.9 in \cite{PS17}. Interesting results in this direction were obtained in \cite{Xu18}.


\

We give a sketch of the proof of Theorem \ref{main-theorem}. We assume that a $3$-group $G$ of rank $\geq 5$ acts on a three-dimensional rationally connected variety by birational self-maps. We assume that the group $G$ is abelian: this can be achieved by passing to the quotient group by its Frattini subgroup, see Proposition \ref{reduction-to-abelian}. First we use a standard technique: resolve indeterminacy of the action of $G$ on $X$ and apply a $G$-MMP to obtain a $G\mathbb{Q}$-Mori fiber space with a faithful action of $G$. Then, using known results on $3$-groups in the plane Cremona group, we exclude the case of a positive-dimensional base. So we have to deal only with the case of a $3$-dimensional $G\mathbb{Q}$-Fano variety. In this case, we consider the action of $G$ on the anti-canonical linear system which turns out to be non-empty by orbifold Riemann-Roch theorem, see Section \ref{sect-anticanonical}. More precisely, there are two cases to examine: either the anti-canonical system contains one element up to multiplication by a scalar, or its dimension is at least $2$. 

In the first case, we study the geometry of the anti-canonical divisor, see Section \ref{sect-special-case}. We show that it is a reduced, irreducible surface that admits an action of a $3$-group of rank $r(G)-1$. Moreover, it is equivariantly birational either to a K3 surface, or to a rational surface, or to a ruled surface over an elliptic curve. In all these subcases we use estimates on the rank of $3$-groups that acts on such surfaces (see Section \ref{sect-action-on-K3}) to derive a contradiction.
 
In Section \ref{sect-general-case} we deal with the case when the anti-canonical system has dimension $\geq 2$. Since $G$ is abelian, we can find a $G$-invariant pencil of anti-canonical elements. If the singularities of this pencil are ``bad enough'', in Proposition \ref{prop-main-reduction} we prove that our variety is $G$-birational to a $G\mathbb{Q}$-Mori fiber space with the base of positive dimension, and we are done. Otherwise, we obtain a pencil of K3 surfaces with at worst canonical singularities. Again, using estimates on the rank of $3$-groups acting on such surfaces, we arrive at a contradiction. 

Our approach differs from that of \cite{Kuz20}: we do not use the classificational results on three-dimensional Gorenstein Fano varieties. This may be useful for possible generalisations.

\subsection*{Acknowledgements} 
The author thanks Yuri Prokhorov for reading the draft of the paper and many valuable comments, Alexandra Kuznetsova and Constantin Shramov for useful discussions. This work is supported by Russian Science Foundation under grant 18-11-00121.

\section{Preliminaries}
We work over the field of complex numbers. All the varieties are projective and defined over $\mathbb{C}$ unless  stated otherwise. We use the language of the minimal model program (the MMP for short), see e.g. \cite{KM98}, and the MMP for linear systems, cf. \cite{Al94}. 

\subsection{Contractions} By a \emph{contraction} we mean a projective morphism $f\colon X \to Y$ of normal varieties such that $f_*\OOO_X = \OOO_Y$. In particular, $f$ is surjective and has connected fibers. A \emph{fibration} is defined as a contraction $f\colon X\to Y$ such that $\dim Y<\dim X$. 

Let $G$ be a finite group. By a \emph{$G$-variety} we mean a variety $X$ together with an action of a group $G$. If $f$ is a $G$-equivariant contraction (resp., a $G$-equivariant fibration) of $G$-varieties, we call it a \emph{$G$-contraction} (resp., a \emph{$G$-fibration}).

\subsection{Pairs and singularities} A \emph{pair} $(X, B)$ consists of a normal variety $X$ and a boundary $\mathbb{Q}$-divisor $B$ with coefficients in $[0, 1]$ such that $K_X + B$ is $\mathbb{Q}$-Cartier. {A \emph{pair} $(X, B)$ \emph{over $Z$} is a pair $(X, B)$ together with a projective morphism $X\to Z$}. Let $\phi\colon W \to X$ be a log resolution of $(X,B)$ and let $K_W +B_W = \phi^*(K_X +B)$.
The \emph{log discrepancy} of a prime divisor $D$ on $W$ with respect to $(X, B)$ is $1 - \mu_D B_W$ and it is denoted by $a(D, X, B)$. We say $(X, B)$ is lc (resp. klt) (resp. $\epsilon$-lc) if $a(D, X, B)$ is $\geq 0$ (resp. $> 0$)(resp. $\geq \epsilon$) for every $D$. Note that if $(X, B)$ is $\epsilon$-lc, 
then automatically $\epsilon \leq 1$ because $a(D, X, B) = 1$ for almost all $D$.

A \emph{lc-place} of $(X, B)$ is a prime divisor $D$ over $X$, that is, on birational models of $X$, such that $a(D, X, B) = 0$. A \emph{lc-center} is the image on $X$ of a lc-place.

We say that a group $G$ acts on the pair $(X, B)$ if $X$ is a $G$-variety and the boundary divisor $D$ is $G$-invariant.


\subsection{Log Fano and log Calabi-Yau pairs} 
\label{sect-log-CY}
Let $(X, B)$ be an lc pair over $Z$. We say $(X,B)$ is \emph{log Fano over $Z$} if $-(K_X+B)$ is ample over $Z$. If $Z$ is a point then $(X, B)$ is called a \emph{log Fano pair}. In this case if $B=0$ then $X$ is called a \emph{Fano variety}. 
We say $(X,B)$ is \emph{log Calabi-Yau over $Z$} if $K_X + B \sim_{\mathbb{Q}} 0$ over $Z$. If $Z$ is a point then $(X, B)$ is called a \emph{log Calabi-Yau pair}. 

\subsection{Mori fiber space}
A $G\mathbb{Q}$-\emph{Mori fiber space} is a $G\mathbb{Q}$-factorial variety $X$ with at worst terminal singularities together with a $G$-contraction $f: X\to Z$ to a variety $Z$ such that $\rho^G(X/Z)=1$ and 
$-K_X$ is ample over $Z$. If $X$ is $G$-factorial (e.g. if $X$ is smooth), we call it $G$-\emph{Mori fiber space}.

\


We formulate results of O. Haution that are applicable to the study of $p$-subgroups in the Cremona group. Recall that a \emph{Chern number} of a smooth projective variety is the intersection number of Chern classes of its tangent bundle.
 
\begin{thm}[{\cite[1.1.2]{Ha19}}]
\label{chern-divisible}
Let $G$ be a $p$-group, and let $X$ be a smooth projective variety endowed with an action of $G$. Assume that either $G$ is abelian, or $\dim X < p$. If $G$ has no fixed points on $X$, then every Chern number of $X$ is divisible by $p$.
\end{thm}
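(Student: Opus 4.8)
The plan is to establish the statement by an induction on $\dim X$, reducing everything to a localization (residue) computation in algebraic cobordism $\Omega_*$ in the spirit of \cite{Ha19}, and exploiting that a $p$-group always has nontrivial center. I first record the model case of a \emph{free} action, which dictates the whole strategy. If a central subgroup $\langle g\rangle\cong\Z/p$ acts freely on $X$, then $\pi\colon X\to Y:=X/\langle g\rangle$ is an \'etale morphism of degree $p$ between smooth projective varieties and $TX\cong\pi^*TY$. Hence, for the Chern monomial $c_I(TX)$ attached to a partition $I$ of $n=\dim X$, the projection formula gives
\[
\int_X c_I(TX)=\int_X \pi^*c_I(TY)=p\int_Y c_I(TY),
\]
which is divisible by $p$. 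Since $\langle g\rangle$ is central, this already settles the theorem whenever some central element of order $p$ acts without fixed points, regardless of the rest of $G$.

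It remains to treat the case where every central element of order $p$ has nonempty fixed locus. Fix a central $g$ with $\langle g\rangle\cong\Z/p$ (the center of a $p$-group is nontrivial) and set $F:=X^g$. As $g$ is a finite-order automorphism of a smooth variety over $\C$, the locus $F$ is smooth, and it is a proper closed subvariety, so each component has dimension $<\dim X$; in particular the hypothesis $\dim X<p$, when assumed, is inherited by $F$. Since $\langle g\rangle$ is central, $F$ is $G$-invariant, and as $g$ acts trivially on $F$ the $G$-action on $F$ factors through $G/\langle g\rangle$, which is again a $p$-group, abelian if $G$ is. Moreover $G/\langle g\rangle$ has no fixed point on $F$, for such a point would be fixed by all of $G$ on $X$. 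Thus $F$ satisfies the hypotheses with strictly smaller dimension, and we may induct; the base case $\dim=0$ is the elementary fact that a fixed-point-free $p$-group action on a finite set has all orbits of size divisible by $p$.

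The crux is to propagate divisibility from $F$ back to $X$, and here I would invoke a residue theorem for the $\Z/p$-action generated by $g$: modulo $p$, the class $[X]\in\Omega_*$ is congruent to a universal expression built from the fixed-point data, namely the eigenbundle decomposition $N_{F/X}=\bigoplus_{j}N_j$ of the normal bundle under $\langle g\rangle$ together with $TF$. Concretely, this writes each $\int_X c_I(TX)$, modulo $p$, as an integer combination of characteristic numbers of $F$ formed from $TF$ and the $N_j$. Consequently the inductive statement must be strengthened to assert divisibility by $p$ of all characteristic numbers of a fixed-point-free such $F$ taken with respect to $TF$ together with arbitrary $G$-equivariant auxiliary bundles, not merely the Chern numbers of $TF$; with this strengthening the residue formula exhibits $\int_X c_I(TX)$ as $p$ times an integer and the descent closes.

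The main obstacle is precisely the residue theorem in the algebraic category: showing that the ``free part'' of the $\Z/p$-action contributes only multiples of $p$ while isolating the fixed contribution $p$-integrally requires the full equivariant cobordism machinery. This is also where the hypotheses enter essentially. The dichotomy is needed not for the central-element reduction — which runs for any $p$-group — but to keep the strengthened localization step $p$-integral at every stage: for abelian $G$ one diagonalizes the action into character eigenbundles and the quotient $G/\langle g\rangle$ remains abelian, while $\dim X<p$ bounds the codimension of $F$ and hence the number of nontrivial eigenbundle contributions, controlling the mod-$p$ residue. For a general $p$-group of large dimension neither simplification survives, and indeed the conclusion can fail outside these regimes; pinning down exactly why the hypotheses are unavoidable, rather than artifacts of the method, is the genuinely delicate point.
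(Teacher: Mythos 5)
The paper does not prove this statement: it is imported verbatim from Haution \cite[1.1.2]{Ha19} and used as a black box (its only role here is to feed Corollary \ref{divisible-by-p}). So there is no internal proof to compare yours against, and I can only assess the proposal on its own terms.

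Your reductions are correct and do follow the natural architecture for such a result: the free-action case via the degree-$p$ \'etale quotient $X\to X/\langle g\rangle$ and the projection formula is fine, and passing to a central $\mathbb{Z}/p\subset G$, observing that $F=X^g$ is smooth, $G$-invariant, of strictly smaller dimension, and carries a fixed-point-free action of $G/\langle g\rangle$, is the right inductive frame. But there is a genuine gap exactly where you flag it: the assertion that, modulo $p$, each Chern number of $X$ is a universal integer combination of characteristic numbers of $F$ built from $TF$ and the eigenbundles $N_j$ of $N_{F/X}$ is not a lemma you can invoke --- it \emph{is} the theorem, in the sense that all of the analytic/cobordism content lives there. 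In the algebraic category this localization statement is precisely what Haution's paper is devoted to establishing, and it is also the only place the hypothesis ``$G$ abelian or $\dim X<p$'' can enter; your closing paragraph gestures at why the hypothesis helps (diagonalizability, bounding the codimension of $F$) but supplies no argument, so the induction does not close. Two smaller points: once you strengthen the inductive statement to characteristic numbers involving arbitrary $G$-equivariant auxiliary bundles, you must re-verify the free case and the base case for that stronger statement (the free case still works because equivariant bundles descend along the \'etale quotient, but this needs saying); and the reduction to a central element handles only one cyclic subgroup at a time, so you must explain how the strengthened statement is preserved when you replace $G$ by $G/\langle g\rangle$ and the auxiliary bundles by their restrictions to $F$ together with the new normal eigenbundles. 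As written, the proposal is a plausible strategy outline rather than a proof.
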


\begin{corollary}
\label{divisible-by-p}
Let $p>2$ be a prime number, and let $G$ be a abelian $p$-group. Suppose that $X$ is a terminal $G\mathbb{Q}$-factorial threefold whose non-Gorenstein singularities are of the type $\frac{1}{2}(1,1,1)$. Moreover, assume that $G$ has no fixed points on $X$. Then the integer $2^3(-K_X)$ is divisible by $p$. 
\end{corollary}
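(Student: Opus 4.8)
The plan is to reduce to the smooth case by passing to a $G$-equivariant resolution and then invoke Theorem \ref{chern-divisible}. Since $X$ has terminal, hence isolated, singularities, the singular locus $\mathrm{Sing}(X)$ is a finite $G$-invariant set of points. As $G$ has no fixed points on $X$, no singular point is $G$-fixed, so $\mathrm{Sing}(X)$ is a disjoint union of $G$-orbits, each of cardinality a power of $p$ greater than $1$ and hence divisible by $p$; in particular the number $N$ of points of type $\frac{1}{2}(1,1,1)$ is divisible by $p$. I would then take a $G$-equivariant resolution $\pi\colon \tilde{X}\to X$, which exists by functoriality of resolution of singularities in characteristic zero. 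The variety $\tilde{X}$ is smooth, and $G$ acts on it without fixed points: any $G$-fixed point of $\tilde{X}$ would map under the equivariant morphism $\pi$ to a $G$-fixed point of $X$, contradicting the hypothesis.

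Because $G$ is abelian, Theorem \ref{chern-divisible} applies to $\tilde{X}$ and shows that every Chern number of $\tilde{X}$ is divisible by $p$; in particular the Chern number $c_1(\tilde{X})^3 = (-K_{\tilde{X}})^3$ is divisible by $p$. It remains to compare $(-K_{\tilde{X}})^3$ with $(-K_X)^3$. Writing $K_{\tilde{X}} = \pi^* K_X + F$ with $F = \sum_i a_i E_i$, one has $-K_{\tilde{X}} = \pi^*(-K_X) - F$, and the mixed terms $(\pi^*(-K_X))^2\cdot F$ and $\pi^*(-K_X)\cdot F^2$ vanish by the projection formula, since $\pi_*$ of any class supported on the exceptional locus is zero. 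Hence $(-K_{\tilde{X}})^3 - (-K_X)^3 = -F^3$, which is a sum of purely local contributions $-F_x^3$, one for each point $x\in\mathrm{Sing}(X)$, as the exceptional divisors over distinct points are disjoint.

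For a Gorenstein terminal point the discrepancies $a_i$ are positive integers and the $E_i$ are Cartier on the smooth $\tilde{X}$, so the local contribution $-F_x^3$ is an integer; moreover, since $G$ permutes the points of each orbit by isomorphisms and the resolution is equivariant, this integer is constant along each $G$-orbit. Thus the total Gorenstein contribution $I$ is an integer divisible by $p$. For a point of type $\frac{1}{2}(1,1,1)$, resolved by a single blow-up, the exceptional divisor is $E\cong \PP^2$ with normal bundle $\OOO_{\PP^2}(-2)$ and discrepancy $\tfrac12$, so $F_x = \tfrac12 E$, $E^3 = \big(c_1(\OOO_{\PP^2}(-2))\big)^2 = 4$, and the local contribution equals $-\tfrac18 E^3 = -\tfrac12$. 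Therefore $(-K_{\tilde{X}})^3 = (-K_X)^3 - \tfrac{N}{2} + I$. Since the non-Gorenstein points are $\frac{1}{2}(1,1,1)$, the divisor $-2K_X$ is Cartier, so $2^3(-K_X)^3 = (-2K_X)^3$ is an integer; multiplying the displayed identity by $8$ gives $2^3(-K_X)^3 = 8(-K_{\tilde{X}})^3 + 4N - 8I$. The right-hand side is divisible by $p$: the first term because $(-K_{\tilde{X}})^3$ is, and the remaining two because $N$ and $I$ are. (The hypothesis $p>2$ is what makes this non-vacuous, as it guarantees $\gcd(2^3,p)=1$, allowing one to clear the factor $8$ in applications.)

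The main obstacle is the local analysis at the singular points carried out in the third paragraph: one must pin down the anticanonical defect $-\tfrac12$ of the $\frac{1}{2}(1,1,1)$ blow-up correctly from $E\cong\PP^2$ and $N_{E/\tilde X}\cong \OOO_{\PP^2}(-2)$, and argue that the contributions of the possibly present Gorenstein terminal points are \emph{integers} that are \emph{constant on $G$-orbits}, so that they sum to a multiple of $p$. Everything else reduces to the fixed-point-freeness of the action on $\tilde{X}$ and a direct application of Theorem \ref{chern-divisible}.
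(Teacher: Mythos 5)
Your proof follows essentially the same route as the paper's: pass to a $G$-equivariant resolution, apply Theorem \ref{chern-divisible} to the smooth model, and observe that the exceptional contributions to $(-K)^3$ are constant on $G$-orbits of singular points, whose lengths are divisible by $p$ because the action has no fixed points. The only real difference is that you evaluate the local contribution $-\tfrac12$ at the $\frac{1}{2}(1,1,1)$ points explicitly (which tacitly assumes the equivariant resolution restricts to the standard blow-up there — harmless, since for any equivariant resolution $2F_Q$ is integral near such a point, so $8F_Q^3\in\mathbb{Z}$ and your orbit-sum argument goes through unchanged), whereas the paper sidesteps the computation by working modulo $p$ in $\mathbb{Z}[1/2]$.
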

\begin{proof}
Since non-Gorenstein singular points on $X$ are of type $\frac{1}{2}(1,1,1)$, we have $(-K_X)^3 = \frac{a}{2}$ where $a\in \mathbb{Z}$. Let $f: X' \to X$ be a $G$-equivariant resolution \cite{AW97}. We may assume that $f$ is an isomorphism outside finite number of singular points of $X$. Write
\[
K_{X'} = f^* K_X + \sum a_i E_i.
\]
Since $2$ is invertible modulo $p$, we can consider the following formula modulo $p$: 
\[
\left(-K_{X'}\right)^3 = \left( - f^* K_X - \sum a_i E_i \right)^3 = (- K_X)^3 - \sum_Q a_Q
\]
where $a_Q$ is a contribution made by exceptional divisors over a singular point $Q\in X$. By Theorem \ref{chern-divisible}, the integer $(-K_{X'})^3$ is divisible by $p$. Since $G$ has no fixed points on $X$, the exceptional divisors $E_i$ form $G$-orbits whose length is divisible by $p$, so it follows that $\sum_Q a_Q$ is equal to $0$ modulo $p$. Note that $2^3(-K_X)^3$ is an integer since $-2K_X$ is a Cartier divisor. Hence $2^3(-K_X)^3$ is divisible by $p$, and the claim follows.
\end{proof}

\begin{thm}[{\cite[1.2.1]{Ha19}}]
Let $G$ be a $p$-group, and let $X$ be a smooth projective variety with an action of $G$. Assume that either $G$ is cyclic, or $\dim X < p - 1$. Then $G$ has no fixed points on $X$ if and only if the Euler characteristic $\chi(X,F)$ of every $G$-equivariant coherent $\OOO_X$-module $F$ is divisible by~$p$.
\end{thm}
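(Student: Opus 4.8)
\emph{The statement is an equivalence, and only one direction carries real content.} The plan is to first dispose of the implication in which a fixed point obstructs divisibility, and then to prove the substantive converse by induction on $|G|$, built around a central subgroup of order $p$.

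I would begin with the easy ``if'' direction in its contrapositive form: assuming $X^G\neq\emptyset$, I exhibit a single equivariant sheaf violating the divisibility. If $x\in X^G$, equip the skyscraper sheaf $k(x)$ with the trivial $G$-linearization, which is legitimate precisely because $x$ is $G$-fixed. Then $H^0(X,k(x))=\C$ and $H^{>0}(X,k(x))=0$, so $\chi(X,k(x))=1$, which is not divisible by $p$. This shows that the content lies entirely in the ``only if'' direction, which I prove next.

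For the converse I interpret the equivariant Euler characteristic as an $R(G)$-module homomorphism $\chi^G\colon K_0^G(X)\to R(G)$ to the representation ring, so that $\chi(X,\mathcal F)=\epsilon(\chi^G(\mathcal F))$, where $\epsilon\colon R(G)\to\Z$ is the virtual dimension. Divisibility of $\chi(X,\mathcal F)$ by $p$ for every equivariant $\mathcal F$ is then the assertion that the image of $\chi^G$ lies in the maximal ideal $\mathfrak m=\epsilon^{-1}(p\Z)$, with $R(G)/\mathfrak m\cong\mathbb F_p$. I argue by induction on $|G|$. Choose a central $Z\cong\Z/p$ (it exists since $G$ is a nontrivial $p$-group) and set $Y:=X^Z$, a smooth $G$-invariant subvariety. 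If $Y=\emptyset$ then each nontrivial power $z^j$ generates $Z$, so $X^{z^j}=X^Z=\emptyset$, and the holomorphic Lefschetz fixed-point formula gives $L(z^j,\mathcal F)=0$ for $j\neq 0$. Thus the character of $\chi^Z(\mathcal F)\in R(Z)$ vanishes off the identity, every irreducible multiplicity equals $\chi(X,\mathcal F)/p$, and integrality of these multiplicities forces $p\mid\chi(X,\mathcal F)$. If $Y\neq\emptyset$, then $Z$ acts trivially on $Y$ while $\bar G=G/Z$ acts on $Y$ with $Y^{\bar G}=X^G=\emptyset$; moreover the hypotheses pass to $(\bar G,Y)$, since a quotient of a cyclic group is cyclic and $\dim Y\le\dim X$. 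By induction every equivariant Euler characteristic on $Y$ is divisible by $p$, and it remains to transport this divisibility from $Y$ to $X$.

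This last transport is the \emph{main obstacle}. Applying the concentration/localization theorem in $Z$-equivariant $K$-theory (equivalently, Atiyah--Bott applied to a generator $z$), one expresses $\chi(X,\mathcal F)$ modulo $p$ through a fixed-point contribution supported on $Y$. A priori this contribution is only an algebraic number, being a combination of $p$-th roots of unity $\zeta$ arising from the $Z$-weights on the normal bundle $N_{Y/X}$, so one must prove that its relevant trace is $p$-integral and congruent mod $p$ to an honest $\bar G$-equivariant Euler characteristic on $Y$, to which the inductive hypothesis applies. This is exactly where the assumption that $G$ is cyclic, or that $\dim X<p-1$, enters: it bounds the codimension of $Y$, hence the number of normal weights, forcing the denominators $(1-\zeta^{w})$ to be controllable $p$-adically and the expansion of the inverse equivariant Euler class to truncate below the dangerous range. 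Without such a bound the local contributions need not reduce to integers modulo $p$ and the equivalence can fail; verifying this $p$-integrality is the technical heart of the argument.
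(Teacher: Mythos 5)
This theorem is not proved in the paper at all: it is quoted verbatim from Haution \cite[1.2.1]{Ha19}, so there is no in-paper argument to compare yours against. Judged on its own terms, your proposal has a genuine gap. The ``if'' direction via the skyscraper sheaf at a fixed point is correct and complete, and the reduction of the ``only if'' direction to a central subgroup $Z\cong\mathbb{Z}/p$ with the dichotomy $X^Z=\emptyset$ versus $X^Z\neq\emptyset$ is a reasonable skeleton (the case $X^Z=\emptyset$ via vanishing of the Lefschetz numbers $L(z^j,\mathcal F)$ and integrality of isotypic multiplicities is fine). But the entire content of the theorem sits in the step you label the ``main obstacle'' and then do not carry out: transporting divisibility from $Y=X^Z$ back to $X$. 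The localization formula expresses $\chi(X,\mathcal F)$ through contributions on $Y$ divided by the $Z$-equivariant Euler class of $N_{Y/X}$, whose factors $(1-\zeta^{w})$ are \emph{not} units at $p$ (indeed $\prod_{j=1}^{p-1}(1-\zeta^{j})=p$), so the individual local terms are honestly non-$p$-integral cyclotomic numbers. Asserting that the hypothesis ``$G$ cyclic or $\dim X<p-1$'' makes the denominators ``controllable $p$-adically'' is a statement of intent, not an argument; in particular for $G$ cyclic the dimension of $X$ is unbounded, so the codimension bound you invoke is simply unavailable in that case and a different mechanism must be supplied. A secondary, unaddressed point: after restricting to $Y$, the sheaf $\mathcal F|_Y$ is $G$-equivariant with $Z$ acting nontrivially on fibres, so it is not itself a $\bar G$-sheaf; you must decompose into $Z$-isotypic pieces and check that the inductive hypothesis applies to each, which again interacts with the weight bookkeeping you have deferred.

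In short: the skeleton is plausible and correctly identifies where the hypothesis must enter, but the step that actually uses the hypothesis --- the $p$-integrality and congruence analysis of the normal-bundle contributions --- is precisely the theorem, and it is missing. Haution's own proof in \cite{Ha19} does not proceed by a direct Atiyah--Bott computation but through a more structural analysis in equivariant $K$-theory; if you want to complete your route you would need to prove the $p$-integrality lemma for the inverse equivariant Euler class under the stated hypotheses, and treat the cyclic case by a separate induction on the order of the group rather than by a codimension bound.
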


\begin{corollary}[{\cite[Main theorem]{Xu20}}]
\label{Haution-Xu}
Let $X$ be a rationally connected variety of dimension n. If $G \subset \mathrm{Bir}(X)$ is a 
$p$-subgroup and $p > n + 1$, then $G$ is abelian and the rank of $G$ is at most $n$. 
\end{corollary}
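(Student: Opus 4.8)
The plan is to replace the birational action by a biregular one on a smooth projective model, to produce a fixed point by means of the Euler characteristic criterion of Haution stated above, and then to extract both conclusions from the representation theory of $p$-groups in characteristic zero.

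First I would choose a smooth projective $G$-variety $Y$ together with a $G$-equivariant birational map $Y \bir X$, which is possible by equivariant resolution of indeterminacy and of singularities \cite{AW97}; thus $G$ acts on $Y$ by regular automorphisms. Since rational connectedness is a birational invariant, $Y$ is rationally connected, and hence $H^i(Y,\OOO_Y)=0$ for all $i>0$. In particular
\[
\chi(Y,\OOO_Y)=h^0(Y,\OOO_Y)=1,
\]
which is not divisible by $p$. I would then invoke the preceding theorem of Haution \cite[1.2.1]{Ha19}: the hypothesis $p>n+1$ is precisely the inequality $\dim Y = n < p-1$, so the theorem applies to $Y$, and taking the $G$-equivariant sheaf $\OOO_Y$ with $\chi(Y,\OOO_Y)=1\not\equiv 0 \pmod p$ forces $G$ to have a fixed point $y\in Y$.

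At such a fixed point the differential gives a linear representation of $G$ on the tangent space $T_yY\cong \C^n$, and this representation is faithful: an element acting trivially on $T_yY$ fixes $y$ to first order, and being of finite order it is linearizable near $y$, so it acts trivially on a neighbourhood of $y$ and therefore on all of the connected variety $Y$. Hence $G$ embeds into $\mathrm{GL}(n,\C)$. By Maschke's theorem $\C^n$ splits into irreducible $G$-subrepresentations, each of dimension a power of $p$; since every such dimension is at most $n<p$, all irreducible constituents are one-dimensional. Consequently $G$ is simultaneously diagonalizable, so it is abelian and embeds into the diagonal torus $(\C^{\times})^n$. For a finite abelian $p$-group the rank equals $\dim_{\mathbb{F}_p}G[p]$, and $G[p]\subset (\C^{\times})^n[p]\cong (\Z/p)^n$, whence $r(G)\leq n$.

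I expect the main obstacle to lie in the first step rather than the last. The representation-theoretic conclusion, once a fixed point is available, is entirely routine; the delicate point is securing the hypotheses of Haution's theorem, namely producing a smooth projective model on which $G$ acts biregularly while preserving rational connectedness, and verifying that the induced action on the tangent space at the fixed point is genuinely faithful. The numerical role of the assumption $p>n+1$ should be emphasised, since it simultaneously guarantees $\dim Y<p-1$ (needed to apply \cite[1.2.1]{Ha19}) and $n<p$ (needed to collapse all irreducible constituents to dimension one).
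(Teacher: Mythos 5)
Your proof is correct and follows exactly the derivation the paper intends: the statement is presented as a corollary of Haution's Euler-characteristic fixed-point theorem \cite[1.2.1]{Ha19} (with the citation to \cite{Xu20}, where this is precisely the argument), namely regularize the action, use $\chi(\OOO_Y)=1$ and $n<p-1$ to produce a fixed point, linearize there, and conclude via the fact that irreducible representations of a $p$-group in dimension $<p$ are one-dimensional. No gaps; the faithfulness of the tangent-space representation and the rank bound via $p$-torsion of the diagonal torus are handled correctly.
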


As a consequence, if $n=3$ then any $p$-group $G\subset \mathrm{Bir}(X)$ for $p\geq 5$ is abelian and has rank $\leq 3$. Now, we present examples of the actions of $p$-group.

\begin{exam} 
\label{exam-2-sharp}
Consider a faithful action of the group $(\mathbb{Z}/2)^2$ on $\mathbb{P}^1$ given by the matrices 
\begin{equation*}
\begin{pmatrix}
0 & 1 \\
1 & 0 
\end{pmatrix},\ \ \ \ \ 
\begin{pmatrix}
-1 & 0 \\
0 & 1 
\end{pmatrix},
\end{equation*}
Then the group $G=(\mathbb{Z}/2)^6$ faithfully acts on $\mathbb{P}^1\times\mathbb{P}^1\times \mathbb{P}^1$. This shows that the bound $r(G)\leq 6$ in Theorem \ref{thm-threefolds} is sharp for $p=2$.
\end{exam}

\begin{exam} 
\label{exam-3-sharp}
Let $S$ be a Fermat cubic surface defined by the equation 
\[
x_0^3 + x_1^3 + x_2^3 + x_3^3 = 0
\] 
in $\mathbb{P}^3$. Let the group $(\mathbb{Z}/3)^3$ act on $S$ by multiplication by a cube root of unity on of the coordinates $x_i$, $1\leq i\leq 3$. Then the group $G=(\mathbb{Z}/3)^3 \times \mathbb{Z}/3$ faithfully acts on $X = S\times \mathbb{P}^1$ where the action of a second factor $\mathbb{Z}/3$ on the projective line is standard. This shows that a $3$-group $G$ of rank $4$ can be embedded in $\mathrm{Cr}_3(\mathbb{C})$, so the bound $r(G)\leq 4$ in Theorem \ref{thm-threefolds} is sharp for $p=3$. 
\end{exam}

\begin{exam}
\label{exam-non-abelian}
For $p=3$ there exists an action of a non-abelian group on a rational threefold. Indeed, consider an action of the group $G = \mathcal{H}_3\times \mathbb{Z}/3$ of rank $3$ on $X = \mathbb{P}^2\times \mathbb{P}^1$. Here $\mathcal{H}_3=(\mathbb{Z}/3\times\mathbb{Z}/3)\rtimes \mathbb{Z}/3$ is the Heisenberg group of rank $2$ and order $27$, cf. \cite[A.2]{Kuz20}. Similarly, one can construct an example of a non-abelian $2$-group of rank $5$ acting on a rational threefold, cf. \cite[Example 2.2]{PS17}.
\end{exam}

The following question seems natural.

\begin{question}
\label{question-abelian}
Assume that for $p=2$ (resp., $p=3$) a $p$-group $G$ with $r(G)=6$ (resp., $r(G)=4$) faithfully acts on a rationally connected $3$-dimensional variety. Is it true that $G$ is abelian? 
\end{question}


The next proposition shows that to bound the rank of $p$-groups acting on a rationally connected variety it is enough to bound the rank of abelian $p$-groups.

\begin{proposition}[{\cite[3.1]{Kuz20}}]
\label{reduction-to-abelian}
Assume that for all rationally connected varieties $Y$ of dimension $n$ and any abelian $p$-subgroup $A \subset \mathrm{Bir}(Y)$ we have $r(A)\leq N$ for some number $N$. If $X$ is rationally connected and of dimension $n$, then for any $p$-subgroup $G \subset \mathrm{Bir}(X)$ (in particular, non-abelian) we have $r(G)\leq N$.
\end{proposition}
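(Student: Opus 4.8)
The plan is to reduce the non-abelian case to the abelian one by passing to the maximal elementary abelian quotient of $G$. Recall that for a finite $p$-group $G$ the Frattini subgroup $\Phi(G) = G^p[G,G]$ is a characteristic (hence normal) subgroup, and by the Burnside basis theorem the quotient $G/\Phi(G)$ is an elementary abelian $p$-group with $\dim_{\mathbb{F}_p}(G/\Phi(G)) = r(G)$; indeed, a subset of $G$ generates $G$ if and only if its image generates $G/\Phi(G)$. Thus it suffices to realize $G/\Phi(G)$ as an abelian $p$-subgroup of $\mathrm{Bir}(Y)$ for some rationally connected $Y$ of dimension $n$, since the hypothesis would then give $r(G) = r(G/\Phi(G)) \leq N$.

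To produce such a $Y$, I would work at the level of function fields. A faithful action of $G$ by birational self-maps of $X$ is the same as a faithful action of $G$ on $K = \mathbb{C}(X)$ by $\mathbb{C}$-algebra automorphisms. Set $H = \Phi(G)$ and let $L = K^H$ be the subfield of $H$-invariants. Since $H$ is normal in $G$, Galois theory of field extensions (Artin's theorem applied to the faithful $G$-action on $K$) shows that $L/K^G$ is Galois with group $G/H$; in particular $G/H$ acts faithfully on $L$. As $K/L$ is finite and $K$ is finitely generated over $\mathbb{C}$, the field $L$ is finitely generated of transcendence degree $n$, so choosing a normal projective model $Y$ with $\mathbb{C}(Y) = L$ realizes $G/H$ as a subgroup of $\mathrm{Bir}(Y)$. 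The inclusion $L \subset K$ corresponds to a dominant rational map $X \bir Y$ (the quotient by $H$), whence $\dim Y = \dim X = n$ because $H$ is finite, and $Y$ is rationally connected since it is the image of the rationally connected variety $X$ under a dominant rational map.

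Finally, $G/H = G/\Phi(G)$ is abelian, so applying the assumed bound to the abelian $p$-subgroup $G/H \subset \mathrm{Bir}(Y)$ yields $r(G/H) \leq N$; combined with $r(G) = \dim_{\mathbb{F}_p}(G/\Phi(G)) = r(G/H)$ this gives $r(G) \leq N$. The one point requiring care — the main obstacle — is the faithfulness of the induced action on the quotient, since a priori an element of $G \setminus H$ could act trivially on $X/H$. The Galois-theoretic formulation handles this automatically, but if one prefers to argue geometrically on a regular model $X'$ (obtained by regularizing the $G$-action on a smooth projective birational model of $X$), one checks that any $g$ acting trivially on $X'/H$ satisfies $X' = \bigcup_{h \in H}\mathrm{Fix}(h^{-1}g)$, and irreducibility of $X'$ forces $h^{-1}g = \mathrm{id}$ for some $h \in H$, i.e. $g \in H$. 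Either way the kernel of $G \to \mathrm{Bir}(Y)$ is exactly $\Phi(G)$, which is precisely what the reduction needs.
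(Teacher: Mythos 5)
Your proposal is correct and follows essentially the same route as the paper: pass to the Frattini quotient $G/\Phi(G)$, which by the Burnside basis theorem is elementary abelian of rank $r(G)$, and realize it as a faithful birational action on the (still rationally connected) quotient $X/\Phi(G)$. The Galois-theoretic justification of faithfulness and the fixed-locus argument are correct fillings-in of details the paper leaves to the reference \cite[3.1]{Kuz20}.
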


We include the idea of its proof for the reader's convenience. Recall that a \emph{Frattini subgroup} $\Phi(G)$ of a $p$-group $G$ is an intersection of all maximal subgroup of $G$. It is normal and has the following remarkable property: the quotient group $G/\Phi(G)$ is isomorphic to an elementary abelian $p$-group $(\mathbb{Z}/p)^{r(G)}$. Passing to a resolution of indeterminacy of the group action, we may assume that a $p$-group $G$ faithfully acts on a rationally connected variety $X$ of dimension $n$. Then the quotient group $G/\Phi(G)$ of rank $r(G)$ faithfully acts on a rationally connected variety $X/\Phi(G)$, and the claim follows.

The next proposition bounds the rank of a $3$-group that acts on a three-dimensional terminal singularity.

\begin{proposition}[{\cite[2.4]{Pr11}, \cite[4.4]{Kuz20}}]
\label{invariant-point-bound}
Assume that $X$ is a threefold with terminal singularities and $G$ is a $3$-subgroup in $\mathrm{Aut}(X)$ fixing a point $x \in X$. Then $r(G)\leq 3$.
\end{proposition}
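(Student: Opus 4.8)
The plan is to reduce the statement to a question about finite $3$-subgroups of linear groups by linearizing the action of $G$ on the germ $(X\ni x)$, treating separately the cases where $x$ is a smooth point and where $x$ is a singular (hence isolated terminal) point.

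First I would dispose of the smooth case. If $x$ is a smooth point, a finite group action on the germ $(X\ni x)$ can be linearized by averaging a local analytic coordinate system, so $G$ embeds into $\mathrm{GL}(T_xX)=\mathrm{GL}_3(\mathbb{C})$, and it suffices to bound the rank of a finite $3$-subgroup of $\mathrm{GL}_3(\mathbb{C})$. Since $G$ is a $3$-group, $T_xX$ decomposes into irreducible $G$-representations of dimension a power of $3$, hence of dimension $1$ or $3$. If all summands are one-dimensional, the three characters embed $G$ into $(\mathbb{C}^\ast)^3$, so $G$ is abelian of rank at most $3$. If $T_xX$ is irreducible, then, $p$-groups being monomial, $G$ is conjugate into the monomial matrices $(\mathbb{C}^\ast)^3\rtimes\langle\sigma\rangle$ with $\sigma$ a $3$-cycle, and computing the abelianization gives $r(G)\le 2$. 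In either case $r(G)\le 3$, equality occurring only in the diagonal abelian case.

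The singular case is the heart of the argument. A terminal threefold singularity is isolated and, by Reid's theory, is analytically a cyclic quotient of a compound Du Val (hence hypersurface) singularity: there is an intrinsic index-one cover $\pi\colon (Y\ni y)\to(X\ni x)$, cyclic of degree $r$ equal to the Gorenstein index, with $Y\subset\mathbb{C}^4$ an isolated cDV germ. Functoriality of the cover lets the $G$-action lift to a finite group $\tilde G$ fitting into an extension $1\to\mathbb{Z}/r\to\tilde G\to G\to 1$. Replacing $\tilde G$ by a Sylow $3$-subgroup $P$, which still surjects onto $G$ since its index is prime to $3$ while $G$ is a $3$-group, I get $r(G)\le r(P)$, and it remains to bound $r(P)$. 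Linearizing again, $P\subset\mathrm{GL}_4(\mathbb{C})$ preserves the defining equation $f$, so $g^\ast f=\chi(g)f$ for a character $\chi$, and each homogeneous part of $f$ is a semi-invariant of the same character.

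The crux, where the bound $3$ rather than $4$ is earned, is the diagonal subcase: if $\mathbb{C}^4$ splits into four characters $\chi_1,\dots,\chi_4$, then $P$ is abelian and I must exclude $r(P)=4$. If $r(P)=4$, the reductions $\bar\chi_i$ modulo $3$ form a basis of $\mathrm{Hom}(P,\mathbb{Z}/3)$, so semi-invariance forces every monomial of $f$ to share a fixed exponent vector $c$ modulo $3$. As $(Y\ni y)$ is cDV it has multiplicity two, so $f$ has a nonzero quadratic part of rank at least two; matching quadratic monomials against $c$ then forces $f_2=\alpha\,x_ix_j$ with $c\equiv e_i+e_j$, whence $x_ix_j$ divides every monomial of $f$ and $\{f=0\}$ is singular along the surface $\{x_i=x_j=0\}$, contradicting isolatedness. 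Hence $r(P)\le 3$; a three-dimensional irreducible summand is handled by the monomial analysis of the smooth case and yields a smaller bound, so $r(G)\le r(P)\le 3$ in every case. I expect the main obstacle to be precisely this last point: transporting the rank bound through the cyclic cover and invoking the isolatedness of the terminal singularity to forbid the extra generator that a naive embedding into $\mathrm{GL}_4(\mathbb{C})$ would permit.
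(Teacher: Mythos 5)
The paper does not actually prove this proposition; it imports it from \cite{Pr11} and \cite{Kuz20}, and your argument is essentially the proof given there: linearize at the fixed point, and in the singular case pass to the index-one cover, lift the action to an extension of $G$ by the covering group, replace it by a Sylow $3$-subgroup $P$, and analyze $P\subset\mathrm{GL}_4(\mathbb{C})$ preserving the cDV equation $f$ as a semi-invariant.

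One step is stated incorrectly, although the conclusion survives. A cDV hypersurface germ has multiplicity two, so $f_2\neq 0$, but the rank of $f_2$ need \emph{not} be at least two: for $cD_n$ and $cE_n$ points one has $f_2=x^2$ of rank one. Hence in your diagonal subcase with $r(P)=4$ you cannot conclude that $f_2=\alpha x_ix_j$; the possibility $f_2=\alpha x_i^2$ remains open and your argument as written does not treat it. It is excluded in the same spirit: the congruence $a\equiv 2e_i\pmod 3$ imposed on every exponent vector of $f$ forces $a_i\geq 2$ in every monomial, so $x_i^2$ divides $f$ and the hypersurface is non-reduced or singular along a surface, again contradicting the isolatedness of the terminal point. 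With that patch (and the observation that the index-one cover may be smooth, in which case the $\mathrm{GL}_3(\mathbb{C})$ analysis of your first case applies directly to $P$), the argument is complete and matches the cited proof.
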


\section{$G$-action on K3 surfaces}
\label{sect-action-on-K3}
In this section, we consider actions of abelian $p$-groups on (possibly, singular) K3 surfaces and their generalisations. By a K3 surface we mean a normal surface $S$ with at worst canonical (that is, $1$-lc) singularities such that $\mathrm{H}^1(\OOO_S)=0$ and $K_S\sim 0$. 

\begin{proposition}
\label{canonical-K3}
For $p\geq 3$, assume that a $p$-group $G$ faithfully acts on a K3 surface $S$. Then $r(G)\leq 2$.
\end{proposition}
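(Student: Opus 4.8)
The plan is to reduce the singular K3 surface to a smooth one equivariantly and then invoke the well-understood theory of automorphisms of smooth K3 surfaces. I will begin by taking the minimal resolution $\pi\colon \widetilde{S}\to S$. Since $S$ has at worst canonical (Du Val) surface singularities, the minimal resolution is crepant, so $K_{\widetilde S}=\pi^*K_S\sim 0$, and moreover $\mathrm{H}^1(\OOO_{\widetilde S})=\mathrm{H}^1(\OOO_S)=0$; hence $\widetilde S$ is a genuine smooth K3 surface. Because the minimal resolution is canonical and functorial, the $G$-action lifts to a faithful action of $G$ on $\widetilde S$. Thus it suffices to bound the rank of a $p$-group acting faithfully on a smooth K3 surface for $p\geq 3$.

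On a smooth K3 surface $\widetilde S$ one has the standard splitting of automorphisms via the action on the one-dimensional space $\mathrm{H}^0(\widetilde S,\Omega^2_{\widetilde S})=\C\omega$ spanned by the nowhere-vanishing holomorphic $2$-form. This gives a character $\chi\colon G\to \C^*$, $g^*\omega=\chi(g)\omega$, whose kernel $G_0$ consists of the \emph{symplectic} automorphisms. The next step is to bound the two pieces separately. Since $G$ is a $p$-group, the image $\chi(G)$ is a finite cyclic $p$-group, contributing at most $1$ to the rank. For the symplectic part $G_0$, the key input is Nikulin's theorem: a finite abelian group acting symplectically and faithfully on a K3 surface is one of a short explicit list, and in particular for an odd prime $p$ the elementary abelian $p$-subgroup of a symplectic $p$-group has rank at most $2$ when $p=3$ (the group $(\Z/3)^2$ does occur) and rank at most $1$ for $p\geq 5$. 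Combining, for $p=3$ one would naively get $r(G)\le r(G_0)+1\le 2+1=3$, which is one too many.

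The hard part, and the main obstacle, is therefore to save this extra generator: I must show that the symplectic and non-symplectic directions cannot be simultaneously maximal, i.e. that $r(G)\le 2$ and not merely $\le 3$. The clean way to do this is to first reduce to the elementary abelian case. By Proposition \ref{reduction-to-abelian} (applied in dimension $2$, or directly by passing to $G/\Phi(G)$ acting faithfully on $\widetilde S/\Phi(G)$ — though here it is cleaner to argue on $\widetilde S$ itself), it is enough to treat $G\cong(\Z/p)^r$ elementary abelian and show $r\le 2$. For such a $G$, I would analyze the restriction of $\chi$ together with the constraint coming from the fixed-point structure and the action on $\mathrm{H}^2(\widetilde S,\Z)$: a symplectic automorphism of order $3$ has a fixed locus of a very restricted type (finitely many points, with a prescribed count), and the nonsymplectic generator acts on $\omega$ by a primitive cube root of unity. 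The obstruction is to rule out an honest $(\Z/3)^3$: one shows that the invariant lattice and the Lefschetz fixed-point formula force the transcendental lattice and the eigenspace decomposition of $\mathrm{H}^2$ to be incompatible with three independent $\Z/3$ actions, since the symplectic subgroup already exhausts the allowed rank $2$ of Nikulin's classification and no further nonsymplectic order-$3$ element can act compatibly on the remaining invariants. I expect this lattice-theoretic incompatibility — pinning down exactly why rank $3$ is forbidden while rank $2$ is realized by Example \ref{exam-3-sharp}-type configurations — to be the technical crux, whereas the reduction to the smooth case and the splitting via $\omega$ are routine.
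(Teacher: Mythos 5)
Your reduction to a smooth K3 surface via the minimal resolution is exactly the paper's first step, and the splitting of $G$ by the character $\chi$ on $\mathrm{H}^0(\widetilde S,\Omega^2_{\widetilde S})$ into the symplectic kernel $G_0$ and a cyclic quotient is the standard route. For $p\geq 5$ your argument is complete: the symplectic $p$-part has rank at most $1$, so $r(G)\leq r(G_0)+1\leq 2$. The problem is $p=3$, and you have located it correctly yourself: since $(\Z/3)^2$ does act symplectically, the extension only gives $r(G)\leq 3$, and the entire content of the proposition in this case is the step that rules out rank $3$. At precisely that point your text stops being a proof: ``I would analyze\ldots'' and ``I expect this lattice-theoretic incompatibility\ldots'' is a plan, not an argument, and nothing you have written shows that a faithful $(\Z/3)^3$-action is impossible. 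The paper does not carry out this analysis either --- it simply cites Nikulin \cite[4.5]{Nik80}, which already contains the classification of finite ($p$-)groups of automorphisms of K3 surfaces, symplectic or not, and directly yields $r(G)\leq 2$ for $p\geq 3$. So you must either cite that result precisely or actually perform the lattice/fixed-point computation; as written, the $p=3$ case is unproved.

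A secondary issue: the proposed reduction to the elementary abelian case by passing to $G/\Phi(G)$ acting on $\widetilde S/\Phi(G)$ does not stay inside the class of K3 surfaces --- the quotient of a K3 surface by a non-symplectic subgroup is typically rational --- so Proposition \ref{reduction-to-abelian}, which concerns rationally connected varieties, is not applicable here, and a $p$-group of rank $r$ need not contain an elementary abelian subgroup of rank $r$. This reduction is also unnecessary: for arbitrary (possibly non-abelian) $G$ one still has $r(G)\leq r(G_0)+r(\chi(G))\leq r(G_0)+1$ from the exact sequence, and the classification of symplectic $p$-groups (not just the abelian ones) gives $r(G_0)\leq 2$ for $p=3$; the genuine difficulty of excluding rank $3$ remains exactly where you identified it.
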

\begin{proof}
Passing to the minimal resolution we may assume that $S$ is a smooth K3 surface with a faithful action of the group $G$. Then the claim follows from the work of V. Nikulin \cite[4.5]{Nik80}. 
\end{proof}


Now we consider actions of $p$-groups on log Calabi-Yau surface pairs, see \ref{sect-log-CY} for the definition. Such pairs generalize the notion of K3 surfaces. Let $(S, \Delta)$ be a lc log Calabi-Yau surface pair endowed with an action of a $p$-group $G$. In particular, the surface $S$ is normal and the boundary $\mathbb{Q}$-divisor $\Delta$ is $G$-invariant.

\begin{proposition}
\label{non-canonical-logCY}
For $p\geq 3$, assume that a $p$-group acts on a log Calabi-Yau surface pair $(S, \Delta)$ which is lc but not $1$-lc (in particular, this is satisfied either if $\Delta\neq 0$ or $\Delta=0$ and $S$ is not canonical). Then $r(G)\leq 3$.
\end{proposition}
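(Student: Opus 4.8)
The plan is to reduce, by $G$-equivariant birational modifications, to a $G$-Mori fibre space that is either a rational surface or a ruled surface over an elliptic curve, and then to bound $r(G)$ separately in each case. First I would pass to the $G$-equivariant minimal resolution $\mu\colon \tilde S\to S$ and set $K_{\tilde S}+\tilde\Delta=\mu^*(K_S+\Delta)$. Since $\mu$ is crepant, $(\tilde S,\tilde\Delta)$ is again an lc log Calabi--Yau pair with a faithful $G$-action, it is not $1$-lc, and $\tilde S$ is smooth. By the negativity lemma the discrepancies of the minimal resolution are $\le 0$, so $\tilde\Delta\ge 0$ is an honest $G$-invariant boundary. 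The point of this step is that $\tilde\Delta\ne 0$: for surfaces a normal singularity is canonical exactly when its minimal resolution is crepant, so the failure of $1$-lc forces either $\Delta\ne 0$ or a strictly negative discrepancy on $\tilde S$; in both cases $\tilde\Delta\ne 0$. Thus I may assume $\tilde S$ smooth with $-K_{\tilde S}\simQ\tilde\Delta\ge 0$ effective and nonzero.

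Next I would run the $G$-equivariant Mori minimal model program on $\tilde S$. I first claim $\kappa(\tilde S)=-\infty$. Indeed $-K_{\tilde S}$ is effective and nonzero, so $\kappa(\tilde S)\le 0$; and if $\kappa(\tilde S)=0$ then the smooth minimal model would have $K\equiv 0$, forcing $\tilde\Delta$ to be entirely contracted and hence anti-effective, contradicting $\tilde\Delta\ge 0$, $\tilde\Delta\ne0$. Therefore $\tilde S$ is ruled and the program terminates at a $G$-Mori fibre space $f\colon S'\to T$ with $\dim T\in\{0,1\}$; pushing forward, $-K_{S'}\simQ\Delta'\ge 0$ is again effective. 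If $\dim T=1$ then the generic fibre is $\mathbb P^1$, and the effectivity of $-K_{S'}$ forces the base $T$ to have genus $\le 1$.

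It remains to bound $r(G)$ in the resulting cases. If $\dim T=0$ then $S'$ is a del Pezzo surface, and if $T\cong\mathbb P^1$ then $S'$ is a conic bundle over $\mathbb P^1$; in both cases $S'$ is rational, so $G\subset\mathrm{Bir}(S')$ is a $p$-subgroup of the plane Cremona group and Theorem~\ref{thm-surfaces} together with Proposition~\ref{reduction-to-abelian} gives $r(G)\le 3$. If $T$ is an elliptic curve $E$, I consider the exact sequence $1\to G_0\to G\to G_E\to 1$, where $G_E$ is the image of $G$ in $\mathrm{Aut}(E)$ and $G_0$ is the kernel. The subgroup $G_0$ preserves every fibre and acts faithfully on a general fibre $\mathbb P^1$, hence embeds in $\mathrm{PGL}_2(\mathbb C)$; since $p\ge 3$ its $p$-subgroups are cyclic, so $r(G_0)\le 1$. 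On the other hand $G_E$ is a $p$-subgroup of $\mathrm{Aut}(E)=E\rtimes\mathbb Z/m$, and a short analysis of this extension shows $r(G_E)\le 2$ for $p\ge 3$. As $G_0$ is normal, $r(G)\le r(G_0)+r(G_E)\le 3$.

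The main obstacle is the reduction step: arranging the minimal model program $G$-equivariantly and, above all, verifying that the resulting Mori fibre space falls into exactly the rational and elliptic-ruled cases --- in particular ruling out a base of genus $\ge 2$ via the effectivity of $-K_{S'}$. A secondary technical point is the rank bound $r(G_E)\le 2$ for a possibly non-abelian $p$-subgroup of $\mathrm{Aut}(E)$; here one uses that the rotation part acts non-trivially (unipotently) on the $p$-torsion of $E$, which prevents an elementary abelian subgroup of rank $3$.
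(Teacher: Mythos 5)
Your proposal is correct and follows essentially the same route as the paper's proof: pass to the minimal resolution, run a $G$-MMP to a $G$-Mori fibre space, use rationality together with Theorem~\ref{thm-surfaces} when the base is a point or $\mathbb{P}^1$, and use the structure of $\mathrm{Aut}(E)=E\rtimes\mathbb{Z}/k$ (with the observation that the order-$3$ rotation forces the relevant $3$-subgroup to have rank $2$) when the base is elliptic. Your write-up is in places slightly more careful than the paper's (e.g.\ identifying the fibre-over-a-point case as a del Pezzo surface rather than $\mathbb{P}^2$, and invoking Proposition~\ref{reduction-to-abelian} to drop the abelian hypothesis in Theorem~\ref{thm-surfaces}), but the strategy is identical.
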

\begin{proof}
Using the assumption that $(S, \Delta)$ is not $1$-lc and taking the minimal resolution of $S$, we may run the $G$-MMP to obtain a $G$-Mori fiber space $S'\to Z$ where $S'$ is a smooth surface, and $Z$ is either a point or a curve. 

If $Z$ is a point then $S'$ is isomorphic to $\mathbb{P}^2$, but a $p$-group of rank $> 3$ cannot act on $\mathbb{P}^2$ faithfully, for example, by Theorem \ref{thm-surfaces}. Hence we may assume that $Z$ is a smooth curve. It is well known that in this case $Z$ is either a rational curve or a curve of genus $1$, see e.g. \cite[4.1]{BL20}. If $Z$ is a rational curve then $S'$ is rational, so again $r(G)\leq 3$ by Theorem \ref{thm-surfaces}.

So we assume that $Z$ is a smooth curve of genus $1$. Note that $Z$ admits a faithful action of a $p$-group $G'$ of rank $r(G)-1$. It is well known that, after fixing the base point on $Z$, we have $\mathrm{Aut}(Z)\simeq Z\rtimes \mathbb{Z}/k$ where $Z$ is identified with the subgroup of translations, and $k$ can be equal to $2,4,6$. For $p\geq 5$, clearly $G\subset Z$, and hence $G'$ sits in the $p$-torsion subgroup of $Z$ which is isomorphic to $(\mathbb{Z}/p)^2$. Therefore in this case $r(G')\leq 2$ and $r(G)\leq 3$. For $p=3$, we have to pay attention to the case $k=3$ and $k=6$. In this case, there is a subgroup isomorphic to $(\mathbb{Z}/3\times \mathbb{Z}/3)\rtimes \mathbb{Z}/3$ in $\mathrm{Aut}(Z)$. However, one checks that the rank of such subgroup is equal to $2$, hence $r(G')\leq 2$. Therefore $r(G)=3$, and the proposition is proven. 
\end{proof}

\section{Anticanonical linear system}
\label{sect-anticanonical}
In this section, we estimate the dimension of the anti-canonical linear system $\mathrm{H}^0(X, \OOO_X(-K_X))$ of a $G\mathbb{Q}$-Fano variety $X$ that admits an action of an abelian $3$-group of rank $\geq 5$. The assumption that $G$ is abelian is justified by Proposition \ref{reduction-to-abelian}.

\subsection{Orbifold Riemann-Roch}
By \cite[10.2]{Re85}, for a terminal threefold $X$ and a Weil $\mathbb{Q}$-Cartier divisor $D$ on it we have the following version of the Riemann-Roch formula:
\begin{equation}
\label{eq-ORR}
\chi (\OOO_X(D)) = \chi (\OOO_X) + \frac{1}{12}D(D-K_X)(2D-K_X) + \frac{1}{12}D\cdot c_2(X) + \sum_Q c_Q (D)
\end{equation}
where for any cyclic quotient singularity $Q$ we have 
\begin{equation}
c_Q(D) = -i \frac{r^2-1}{12r} + \sum_{j=1}^{i-1}\frac{\overline{bj} (r-\overline{bj}) }{2r}.
\end{equation}
Here $r$ is the index of $Q$, the divisor $D$ has type $i \frac{1}{r}(a, -a, 1)$ at $Q$, $b$ satisfies $ab=1 \ \mathrm{mod}\ r$, and $\overline{\ }$ denotes the residue modulo $r$. Non-cyclic non-Gorenstein singularities correspond to a basket of cyclic points. Moreover, by \cite[10.3]{Re85} one has 
\begin{equation}
\label{eq-Miyaoka}
(-K_X)\cdot c_2(X) + \sum_Q (r-1/r) = 24
\end{equation}

The following proposition shows that the number of non-Gorenstein points is bounded by $9$ under our assumptions. We omit its proof which follows from Proposition \ref{invariant-point-bound} and formulas \eqref{eq-ORR}, \eqref{eq-Miyaoka}.

\begin{proposition}[{\cite[4.9]{Kuz20}}]
\label{bound-non-Gorenstein-points}
Assume that a $3$-group $G$ of rank $r$ acts on a $G\mathbb{Q}$-factorial terminal Fano threefold $X$, and that $X$ is non-Gorenstein. Then $r\leq 5$ and if $r=5$ then non-Gorenstein points on $X$ have type $\frac{1}{2}(1,1,1)$ and form one orbit of length $9$.
\end{proposition}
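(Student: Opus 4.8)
The plan is to combine the orbifold Riemann–Roch apparatus of \eqref{eq-ORR}--\eqref{eq-Miyaoka} with the fixed-point bound of Proposition \ref{invariant-point-bound}, so I first set up what each tool controls. The key quantity is the collection of non-Gorenstein cyclic quotient singularities of $X$, which by the terminal classification are all of type $\frac{1}{r}(1,-1,b)$. For each such point the correction term $c_Q(0)$ entering the Euler characteristic and the contribution $(r-1/r)$ to \eqref{eq-Miyaoka} are strictly positive and bounded below by an explicit constant. The group $G$ of rank $r$ permutes the non-Gorenstein points, and since $G$ is a $3$-group, every orbit has length a power of $3$. The punchline I am aiming for is that each orbit forces $G$ to have a large enough stabilizer fixing one of its points, and Proposition \ref{invariant-point-bound} caps the rank of any such stabilizer at $3$.

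The main structural step is the orbit-counting argument. If $G$ has rank $r$ and acts on the set of non-Gorenstein points with an orbit of length $3^s$, then the stabilizer $G_Q$ of a point $Q$ in that orbit has index $3^s$ in $G$, so $r(G_Q)\geq r-s$; since $G_Q$ fixes the point $Q$ on the terminal threefold $X$, Proposition \ref{invariant-point-bound} gives $r(G_Q)\leq 3$, whence $s\geq r-3$. So as soon as $r\geq 5$ every non-trivial orbit of non-Gorenstein points has length at least $3^{r-3}\geq 9$. This is the lever that will both bound $r$ and pin down the structure when $r=5$.

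To get the numerical bound $r\leq 5$ I would feed this into the global constraint \eqref{eq-Miyaoka}: since $(-K_X)\cdot c_2(X)\geq 0$ (indeed it is positive for terminal Fano threefolds by Miyaoka-type inequalities, which is presumably the content being quoted from \cite{Re85}), the sum $\sum_Q(r_Q-1/r_Q)$ over the basket is at most $24$. Each non-Gorenstein point contributes at least $r_Q-1/r_Q\geq 2-\tfrac12=\tfrac32$, and by the previous paragraph the non-Gorenstein points come in orbits of length at least $9$ once $r\geq 6$; a single orbit of length $\geq 27$ would contribute at least $27\cdot\tfrac32>24$, a contradiction, while for $r=6$ the orbit length is already forced to be $\geq 3^{3}=27$. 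This rules out $r\geq 6$, giving $r\leq 5$. For the boundary case $r=5$, the orbit-length estimate gives orbits of length $\geq 3^{2}=9$; since $9\cdot\tfrac32=\tfrac{27}{2}<24$ but any point of index $r_Q\geq 3$ would contribute $r_Q-1/r_Q\geq 3-\tfrac13=\tfrac83$ and $9\cdot\tfrac83=24$ leaves no room for the positive term $(-K_X)\cdot c_2(X)$, the index must be $r_Q=2$, i.e. the points are of type $\frac{1}{2}(1,1,1)$, and they form a single orbit, necessarily of length exactly $9$ (length $27$ already violates \eqref{eq-Miyaoka}).

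The step I expect to be the genuine obstacle is not the orbit combinatorics but verifying the positivity and lower bounds on the local correction terms with enough precision to make the arithmetic in \eqref{eq-Miyaoka} tight at $r=5$, together with ensuring that the fixed point supplied to Proposition \ref{invariant-point-bound} is a genuine scheme-theoretic fixed point of the stabilizer rather than merely a fixed point of the induced permutation action; one must check that $G_Q$ fixes $Q\in X$ as a point of the variety, which holds because $G_Q$ stabilizes $Q$ setwise and $Q$ is a single closed point. The delicate part is excluding intermediate indices $r_Q\in\{3,4,5,\dots\}$ at $r=5$ purely from the inequality $9\cdot(r_Q-1/r_Q)+(-K_X)\cdot c_2(X)\leq 24$ with the strict positivity of the Chern term; this is where the proof that is omitted in the excerpt presumably does the careful bookkeeping, and I would reconstruct exactly that case analysis.
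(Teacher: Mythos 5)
Your argument is correct and follows essentially the same route the paper indicates (and spells out in a proof that is suppressed in the source): combine the Miyaoka-type equality \eqref{eq-Miyaoka} with the orbit--stabilizer count and the rank bound of Proposition \ref{invariant-point-bound} for stabilizers of non-Gorenstein points. Your bookkeeping at $r=5$, including the need for strict positivity of $(-K_X)\cdot c_2(X)$ to exclude index $\geq 3$, matches the paper's computation.
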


From now on we assume that $r(G)=5$. In the next proposition we compute the dimension of the anti-canonical system on $X$ under this assumption.

\begin{proposition}
\label{rank-5-dichotomy}
Let $X$ be a $G\mathbb{Q}$-Fano threefold where $G$ is a $3$-group of rank $5$. Then either
\begin{itemize}
\item
$(-K_X)^3=1/2$ and $\dim |-K_X| = 0$, or
\item
$\dim |-K_X|\geq 1$. 
\end{itemize}
\end{proposition}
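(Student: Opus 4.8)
The plan is to pin down $\dim|-K_X|$ exactly by feeding $D=-K_X$ into the orbifold Riemann--Roch formula \eqref{eq-ORR}, and then to read off the dichotomy from the resulting closed expression together with the constraint that a linear-system dimension must be a non-negative integer. First I would record the two facts that convert Euler characteristics into genuine dimensions: since $X$ is Fano with terminal (hence rational) singularities, Kawamata--Viehweg vanishing gives $\chi(\OOO_X)=1$, and as $-2K_X$ is an ample Cartier divisor it likewise gives $\mathrm{H}^i(X,\OOO_X(-K_X))=0$ for $i>0$, so that $\dim|-K_X| = h^0(X,\OOO_X(-K_X)) - 1 = \chi(\OOO_X(-K_X)) - 1$. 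Next I would invoke Proposition~\ref{bound-non-Gorenstein-points}: since $r(G)=5$, either $X$ is Gorenstein, or the non-Gorenstein locus consists of exactly nine points, all of type $\tfrac12(1,1,1)$, forming a single $G$-orbit.

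The heart of the matter is the non-Gorenstein case, where I would evaluate the three contributions to \eqref{eq-ORR} separately. The cubic term simplifies since $D(D-K_X)(2D-K_X) = (-K_X)(-2K_X)(-3K_X) = 6(-K_X)^3$, contributing $\tfrac12(-K_X)^3$. For the $c_2$-term I would feed the nine index-$2$ points into \eqref{eq-Miyaoka}, obtaining $(-K_X)\cdot c_2(X) = 24 - 9\bigl(2-\tfrac12\bigr) = \tfrac{21}{2}$, hence $\tfrac{1}{12}(-K_X)\cdot c_2(X) = \tfrac78$. Finally, at each $\tfrac12(1,1,1)$-point the index is $r=2$, the only nontrivial local class is $i=1$, and $-K_X$ is non-Cartier there, so its local type is exactly $i=1$; the sum $\sum_{j=1}^{i-1}$ in the formula for $c_Q(D)$ is then empty and $c_Q(-K_X) = -\tfrac{r^2-1}{12r} = -\tfrac18$, giving $-\tfrac98$ over the nine points. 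Collecting terms,
\begin{equation*}
\dim|-K_X| = \chi(\OOO_X(-K_X)) - 1 = \tfrac12(-K_X)^3 + \tfrac78 - \tfrac98 = \tfrac12(-K_X)^3 - \tfrac14 .
\end{equation*}

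With this identity the dichotomy is forced purely by integrality. Since $\dim|-K_X|$ is a non-negative integer and $(-K_X)^3>0$, the equation permits only $(-K_X)^3 = \tfrac{1+4k}{2}$ with $k\in\mathbb{Z}_{\geq 0}$, in which case $\dim|-K_X| = k$. If $k=0$ then $(-K_X)^3=\tfrac12$ and $\dim|-K_X|=0$, the first alternative; if $k\geq 1$ then $(-K_X)^3\geq\tfrac52$ and $\dim|-K_X|\geq 1$, the second. It remains to dispatch the Gorenstein case, where \eqref{eq-ORR} carries no correction terms and $(-K_X)\cdot c_2(X)=24$, so that $\dim|-K_X| = 2 + \tfrac12(-K_X)^3$; as $(-K_X)^3$ is then a positive even integer this gives $\dim|-K_X|\geq 3$, again the second alternative.

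I expect the one genuinely delicate step to be the evaluation of the local contributions $c_Q(-K_X)$, which rests on correctly identifying the type of $-K_X$ at each singular point. Here the low index $r=2$ is a real simplification: the local class group is $\mathbb{Z}/2$, the class of $-K_X$ is its unique nontrivial element, the summation in $c_Q$ degenerates, and the value $-\tfrac18$ is unambiguous. As a consistency check, these values reproduce $\chi(\OOO_X(-K_X))=1$ precisely when $(-K_X)^3=\tfrac12$. The only other point needing care is the vanishing used to identify $h^0$ with $\chi$, which is standard for Fano varieties with canonical singularities.
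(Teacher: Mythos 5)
Your proposal is correct and follows essentially the same route as the paper: orbifold Riemann--Roch for $D=-K_X$ combined with Kawamata--Viehweg vanishing, Proposition~\ref{bound-non-Gorenstein-points} to pin down the nine $\tfrac12(1,1,1)$-points, formula \eqref{eq-Miyaoka} for the $c_2$-term, and integrality of $h^0$ to force the dichotomy. Your explicit parametrization $(-K_X)^3=\tfrac{1+4k}{2}$ makes the integrality step slightly more transparent than the paper's phrasing, but the computation and conclusion are identical.
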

\begin{proof}
Consider two cases: when $X$ is Gorenstein and when it is not. In the first case, we may use the usual Riemann-Roch formula \eqref{eq-ORR} without the correction terms $c_Q$. From Kawamata-Viehweg vanishing theorem it follows that $\chi(\OOO_X(-K_X))=h^0(\OOO_X(-K_X))$ and $\chi(\OOO_X) = 1$, so for $D=-K_X$ we obtain
\[
4 \leq g+2 = h^0 (\OOO_X(-K_X)) = 3 + \frac{1}{2} \left(-K_X\right)^3,
\]
where the number $g$ is called the \emph{genus} of $X$. The claim is proven in this case.

Now assume that $X$ is non-Gorenstein. For $D = -K_X$ and singular points of type $\frac{1}{2}(1,1,1)$ we have $c_Q = -1/8$, so if there are $N$ such points, in the formula \eqref{eq-ORR} they give a contribution $-N/8$. From \eqref{eq-Miyaoka} it follows that 
\begin{equation}
(-K_X)\cdot c_2(X) = 24 - \frac{3N}{2}.
\end{equation}
We have
\begin{equation}
\label{eq-rr-half-points}
h^0 ( \OOO_X (-K_X) ) = 3 + \frac{1}{2}\left(-K_X\right)^3 - \frac{N}{4} 
\end{equation}
Since by Proposition \ref{bound-non-Gorenstein-points} we have $N=9$, using $(-K_X)^3\geq 1/2$ we obtain 
\begin{equation}
h^0 ( \OOO_X (-K_X) ) = \frac{3}{4} + \frac{1}{2}\left(-K_X\right)^3 \geq 1
\end{equation}
We see that $h^0(\OOO_X (-K_X)) = 1$ if and only if $(-K_X)^3=1/2$, and if $(-K_X)^3\geq 1$ then $h^0(\OOO_X (-K_X))\geq~2$.
\end{proof}


\section{$G$-action on threefolds. Special case}
\label{sect-special-case}
In this section, we work in the following setting.
\begin{setting}
\label{setting-special-case}
Let $X$ be a terminal $G\mathbb{Q}$-factorial Fano threefold where $G$ is an abelian $3$-group of rank $r = 5$. Assume that $(-K_X)^3=1/2$ and $\dim |-K_X| = 0$. Denote the unique anti-canonical element in $|-K_X|$ by $S$. 
\end{setting}

This case can be excluded by Corollary \ref{divisible-by-p}. However, we give an alternative proof.

\begin{proposition}[{\cite[2.9]{Pr11}}]
\label{the-pair-is-lc}
The pair $(X, S)$ is lc.
\end{proposition}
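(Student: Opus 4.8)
The claim is that $(X,S)$ is log canonical, where $S$ is the unique anti-canonical member and $(-K_X)^3 = 1/2$. Since $K_X + S \sim 0$, the pair $(X,S)$ is a log Calabi-Yau pair, so proving it is lc is equivalent to showing it is not worse than lc at any point. The plan is to argue by contradiction: suppose $(X,S)$ is \emph{not} lc. Then there is a non-empty non-klt locus, and more importantly there is a nonempty locus where log canonicity fails, which I would like to cut out using the connectedness and subadjunction machinery of the MMP. The overall strategy is to exploit the uniqueness of $S$ together with the $G$-action: because $\dim|-K_X| = 0$, the divisor $S$ is forced to be $G$-invariant, and any minimal non-lc (or minimal lc) center of $(X,S)$ must then be a $G$-invariant subvariety of small dimension, which will interact badly with the rank bound coming from Proposition~\ref{invariant-point-bound}.

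\textbf{Key steps.} First I would record that $S \in |-K_X|$ is unique, hence $G$-invariant, and that $(X,S)$ is log Calabi-Yau. Second, assuming $(X,S)$ is not lc, I would use the theory of minimal log canonical (or non-lc) centers: after a suitable perturbation, the non-lc locus contains a minimal center $Z$ which is either a point or a curve. By Kawamata's subadjunction / connectedness results for log Calabi-Yau pairs, the non-klt locus of such a pair is connected; combined with the uniqueness of $S$ this severely restricts $Z$. Third — and this is where I expect the numerical input to enter — I would compare the multiplicity of $S$ along $Z$ with the intersection-theoretic constraint $(-K_X)^3 = 1/2$. The point is that if $(X,S)$ fails to be lc, $S$ must be ``too singular'' along some low-dimensional center, and the smallness of the anti-canonical volume $1/2$ leaves no room for this: a highly singular $S$ would force $(-K_X)^3$ to exceed $1/2$, or else force the existence of a $G$-fixed point. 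Fourth, if the argument produces a $G$-invariant point (the most likely outcome, since a zero-dimensional minimal center is $G$-fixed by invariance of $S$), then Proposition~\ref{invariant-point-bound} gives $r(G) \leq 3$, contradicting the standing assumption $r(G) = 5$ in Setting~\ref{setting-special-case}.

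\textbf{Main obstacle.} The hard part will be step three: controlling the singularities of $S$ precisely enough to derive a numerical contradiction from the tiny volume $(-K_X)^3 = 1/2$. One must handle the terminality and $G\mathbb{Q}$-factoriality of $X$ carefully, since $X$ itself is singular (with the nine $\tfrac{1}{2}(1,1,1)$ points from Proposition~\ref{bound-non-Gorenstein-points}), so the usual smooth-ambient subadjunction formulas need their orbifold corrections. A clean alternative, which I would pursue in parallel, is to avoid delicate center analysis altogether: since $(X,S)$ is log Calabi-Yau with $S$ irreducible, a failure of lc would make $(X, (1-\epsilon)S)$ a log Fano pair whose only boundary is a multiple of the unique anti-canonical divisor, and I would try to extract a $G$-invariant non-lc center directly as the cosupport, then again invoke invariance plus Proposition~\ref{invariant-point-bound}. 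Given that the excerpt cites \cite[2.9]{Pr11} for this proposition, I expect the actual proof to follow Prokhorov's local analysis of the anti-canonical divisor on a minimal $\mathbb{Q}$-Fano threefold, and the role of the $G$-action is merely to rule out the exceptional configurations via the fixed-point rank bound.
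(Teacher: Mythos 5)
Your overall strategy --- assume $(X,S)$ is not lc, scale the boundary down to get a log Fano pair, extract a minimal lc center, make it unique (hence $G$-invariant) by a tie-breaking perturbation, and contradict the rank bound of Proposition~\ref{invariant-point-bound} --- is exactly the paper's argument, which is itself only sketched and deferred to \cite[2.9]{Pr11}: one takes the maximal $\lambda<1$ with $(X,\lambda S)$ lc, observes that $(X,\lambda S)$ is an lc log Fano pair, perturbs by a small $G$-invariant divisor so that the minimal lc center $Z$ becomes the unique lc center, and then $Z$ is either a point or a smooth rational curve. Two points in your write-up need attention.

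First, your ``key step three'' --- deriving a contradiction by comparing the multiplicity of $S$ along $Z$ with $(-K_X)^3=1/2$ --- is a misdirection: you single it out as the main obstacle, but the paper's proof uses no intersection numbers at this stage, and the value $1/2$ plays no role in Proposition~\ref{the-pair-is-lc} (it is only used elsewhere, e.g.\ in Proposition~\ref{S-reduced-irreducible}). Second, and more substantively, you only close the case where the minimal center $Z$ is a point. When $Z$ is a smooth rational curve, Proposition~\ref{invariant-point-bound} does not apply directly; the paper handles this by considering the action of $G$ on $Z$ together with its action on the normal bundle at the generic point of $Z$, again concluding $r(G)\leq 3$. (Alternatively, since a finite $3$-subgroup of $\mathrm{Aut}(\mathbb{P}^1)\simeq\mathrm{PGL}_2(\mathbb{C})$ is cyclic and hence fixes a point, the $G$-invariance of $Z$ already produces a $G$-fixed point of $X$, and Proposition~\ref{invariant-point-bound} applies after all.) Without some version of this step the contradiction is not established, so as written your argument has a genuine, though repairable, gap.
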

\begin{proof}[Sketch of proof]
If the pair $(X, S)$ is not lc then pick a maximal $\lambda<1$ such that the pair $(X, \lambda S)$ is lc. Note that it is an lc log Fano pair. Consider its minimal lc-center $Z$. After perturbing the pair by a $G$-invariant divisor with small coefficients we may assume that $Z$ is a unique, and hence $G$-invariant, lc-center of the lc log Fano pair $(X, \Delta)$. Moreover, $Z$ is either a point or a smooth rational curve. Considering an action of $G$ on $Z$ and on the normal bundle to the generic point on $Z$ and using Proposition \ref{invariant-point-bound}, we conclude that in both cases we have $r(G)\leq 3$ which contradicts our assumptions.
\end{proof} 

\begin{proposition}
\label{S-reduced-irreducible}
$S$ is irreducible and reduced. 
\end{proposition}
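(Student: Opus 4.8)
The plan is to show that $S\in|-K_X|$ is reduced and irreducible by exploiting the three facts already in hand: the pair $(X,S)$ is lc (Proposition \ref{the-pair-is-lc}), the self-intersection $(-K_X)^3=1/2$ is very small, and $X$ carries a faithful action of an abelian $3$-group $G$ of rank $5$ that (since $\dim|-K_X|=0$) necessarily fixes the unique anti-canonical element $S$. The abelianness and large rank of $G$ are the crucial extra input: any proper $G$-invariant subscheme of $X$ produces a $G$-action on a lower-dimensional object, and the rank bounds of Sections \ref{sect-action-on-K3} and the point-stabilizer bound of Proposition \ref{invariant-point-bound} will force a contradiction with $r(G)=5$.

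First I would address irreducibility. Since $S$ is the unique element of $|-K_X|$ and $G$ acts on the one-dimensional space $\mathrm{H}^0(X,\OOO_X(-K_X))$ by a character, the divisor $S$ is $G$-invariant as a set, and $G$ permutes its irreducible components. If $S$ were reducible, write $S=\sum_i S_i$ with the $S_i$ grouped into $G$-orbits; because $\rho^G(X)=1$ each component is ample, so the number of components is bounded and $G$ acts on this finite set of components. I would argue that $G$, being an abelian $3$-group of rank $5$, must fix at least one component $S_1$ (a small orbit would force a large stabilizer, and the permutation representation on few ample components cannot be faithful modulo a large kernel). The subgroup fixing $S_1$ has rank close to $5$ and acts on $S_1$; one then runs the surface-rank estimates to bound this rank. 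Alternatively, and more cleanly, reducibility of an anti-canonical member with $(-K_X)^3=1/2$ contradicts the numerical positivity: each $S_i$ is an ample Weil divisor, so $(-K_X)^2\cdot S_i$ is a strictly positive rational number bounded below, and summing over several components overshoots the available $(-K_X)^3=1/2$. I expect the cleanest route is this numerical one combined with the observation that $S$ is connected (from $h^0(\OOO_S)=1$, via the restriction sequence and Kawamata–Viehweg vanishing giving $\mathrm{H}^1(\OOO_X(-K_X-S))=\mathrm{H}^1(\OOO_X(2K_X))=0$).

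Next I would treat reducedness. Suppose $S=\sum_i m_i S_i$ has a component with multiplicity $m_i\geq 2$. Then the pair $(X,S)$ has a boundary coefficient $\geq 2>1$ along that component, which immediately violates the lc (indeed the very definition of a boundary with coefficients in $[0,1]$) conclusion of Proposition \ref{the-pair-is-lc}: an lc pair $(X,S)$ cannot have a component of $S$ with coefficient exceeding $1$. Thus lc-ness forces every multiplicity to be $1$, so $S$ is reduced. Combined with the irreducibility argument above, $S$ is a prime divisor.

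The main obstacle will be the irreducibility step, specifically ruling out the case where $G$ genuinely permutes several equal irreducible components nontrivially. Here the numerical bound from $(-K_X)^3=1/2$ does the heavy lifting, but one must be careful that the components are $\mathbb{Q}$-Cartier multiples of $-K_X$ only after accounting for $G\mathbb{Q}$-factoriality and $\rho^G(X)=1$: a priori the individual $S_i$ need not be proportional to $-K_X$ in $\mathrm{Pic}(X)\otimes\mathbb{Q}$, only their $G$-orbit sum is. I would resolve this by passing to the $G$-invariant part of the Néron–Severi group, where $\rho^G(X)=1$ guarantees that the $G$-orbit sum of any set of components is numerically proportional to $-K_X$; then each orbit-sum contributes a positive multiple of $(-K_X)^3$, and since $(-K_X)^3=1/2$ is below the threshold needed for two disjoint orbit-sums, there is exactly one orbit, which by connectedness of $S$ is a single component. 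This reduces the whole statement to the lc coefficient bound for reducedness and the low-degree numerical estimate for irreducibility.
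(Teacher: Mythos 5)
Your reducedness step is sound and is essentially the cheap part: an lc pair cannot have a boundary component of coefficient $>1$, so Proposition~\ref{the-pair-is-lc} excludes multiplicities $\geq 2$ at once. The irreducibility step, however, has genuine gaps. Your ``clean numerical route'' rests on the claim that each $(-K_X)^2\cdot S_i$ is ``a strictly positive rational number bounded below,'' but you never say bounded below by what, or why. The actual source of the bound is integrality: by Proposition~\ref{bound-non-Gorenstein-points} the non-Gorenstein points of $X$ have type $\frac{1}{2}(1,1,1)$, so $2S=-2K_X$ is Cartier, and the intersection of the square of a Cartier divisor with a Weil divisor is a (positive, by Nakai--Moishezon) integer; hence $(2S)^2\cdot S_i\geq 1$, i.e.\ $(-K_X)^2\cdot S_i\geq 1/4$. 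Without this input nothing prevents, say, three $G$-orbit-sums each contributing $1/6$ to $(-K_X)^3=1/2$, and your ``threshold'' argument has no content. Moreover, even with the correct bound the numerics do not ``overshoot'': from $\sum k_i\,(2S)^2\cdot S_i=4S^3=2$ one only gets $\sum k_i\leq 2$, so the cases $S=S_1+S_2$ and $S=2S_1$ survive (two components contribute exactly $1/2$, not more than $1/2$).

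Those residual cases are where the real work lies, and your proposal does not close them. The paper's route: a $3$-group acting on at most two components fixes each of them, so by $\rho^G(X)=1$ each $S_i$ is proportional to $-K_X$ in $\mathrm{Cl}(X)$ modulo torsion; then the normalization $S_1^\nu$ is an lc del Pezzo surface carrying a faithful action of a $3$-group of rank $r(G)-1=4$ (the kernel of $G\to\mathrm{Aut}(S_1)$ acts on the one-dimensional normal direction at the generic point of $S_1$, hence is cyclic), which contradicts Theorem~\ref{thm-surfaces} in the klt case and the lc-center argument of Proposition~\ref{the-pair-is-lc} in the strictly lc case. Your alternative stabilizer sketch points vaguely in this direction but does not reach it: if $G$ permutes the components nontrivially, an individual $S_i$ need not be $\mathbb{Q}$-Cartier or ample, so ``each component is ample'' is unjustified, and you supply no surface-level contradiction for a fixed component. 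Finally, your claim that connectedness of $S$ reduces a single $G$-orbit of components to a single component is false: an orbit of three mutually intersecting components is connected.
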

\begin{proof}
By our assumptions $S^3=1/2$ and $2S$ is Cartier. Assume that $S$ is either reducible or non-reduced. Write $S =\sum k_i S_i$ in $\mathrm{Cl}(X)$ for some $k_i\geq 1$. 
The intersection number $(2S)^2\cdot D$ for a Cartier divisor $2S$ and a Weil divisor $D$ is an integer, see e.g. \cite[1.34]{KM98}. We have $(2S)^2\cdot D>0$ by Nakai-Moishezon criterion, see \cite[1.37]{KM98}. Write
\[
2 = \sum k_i S_i \cdot (2S)^2 \geq \sum k_i
\]
and conclude that either $S=S_1+S_2$ or $S=2S_1$. Since $G$ is a $3$-group, in both cases the components of $S$ are $G$-invariant. 

Since $\rho^G(X) = 1$, it follows that the components of $S$ are proportional to $S$ in $\mathrm{Cl}(X)$ modulo the torsion subgroup. By Proposition \ref{the-pair-is-lc} we may assume that the pair $(X, S)$ is lc. Then the normalization $S_1^\nu$ of $S_1$ is a lc del Pezzo surface endowed with an action of a $3$-group $G'$ of rank $r(G) - 1 = 4$. Indeed, this can be seen by considering the action of $G$ on $S_1$ and on the tangent space to its generic point. If $S_1^\nu$ is klt then it is rational and we derive a contradiction to Theorem \ref{thm-surfaces}. If $S_1^\nu$ is strictly lc then we may apply the same argument as in Proposition \ref{the-pair-is-lc} for $\lambda=1$ to derive a contradiction. This shows that $S$ is irreducible and reduced. 
\end{proof}

\begin{proposition}
\label{special-case-does-not-occur}
The case $r(G) = 5$ as in the setting \ref{setting-special-case} does not occur.
\end{proposition}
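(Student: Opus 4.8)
The plan is to transfer the problem to the anti-canonical surface $S$ by adjunction and then invoke the rank bounds for $3$-groups on log Calabi--Yau surfaces from Section~\ref{sect-action-on-K3}. Since $\dim|-K_X|=0$, the surface $S$ is the unique member of $|-K_X|$, hence $G$-invariant; by Propositions~\ref{the-pair-is-lc} and~\ref{S-reduced-irreducible} it is irreducible, reduced, and $(X,S)$ is an lc log Calabi--Yau pair (indeed $K_X+S\sim 0$). Let $\nu\colon S^\nu\to S$ be the normalization and write $K_{S^\nu}+\Delta=\nu^*(K_X+S)|_S\sim 0$, where $\Delta=\mathrm{Diff}_{S^\nu}(0)$. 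Then $(S^\nu,\Delta)$ is an lc log Calabi--Yau surface pair carrying an action of $G$.

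First I would record the drop in rank. Let $K=\ker\big(G\to\mathrm{Aut}(S)\big)$. Any $g\in K$ fixes $S$ pointwise, so at a general (smooth) point of $S$ it acts on the normal direction by a root of unity; this defines an injective homomorphism $K\hookrightarrow\mathbb{C}^*$ (an element acting trivially to first order along the divisor $S$ must be trivial on the connected variety $X$). Hence $K$ is cyclic with $r(K)\le 1$, and since $G$ is abelian the image $G'$ of $G$ in $\mathrm{Aut}(S)\subset\mathrm{Aut}(S^\nu)$ acts faithfully with $r(G')\ge r(G)-1=4$.

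Now I would split according to whether $(S^\nu,\Delta)$ is $1$-lc. If it is \emph{not} $1$-lc --- which happens exactly when $\Delta\ne 0$ (in particular whenever $S$ is non-normal, since then the conductor contributes to $\Delta$) or when $S^\nu$ has worse than canonical singularities --- then Proposition~\ref{non-canonical-logCY} gives $r(G')\le 3$, contradicting $r(G')\ge 4$. If $(S^\nu,\Delta)$ \emph{is} $1$-lc, then necessarily $\Delta=0$ and $S^\nu$ is canonical; in particular $S=S^\nu$ is normal and $K_S\sim 0$. It then remains to check that $S$ is a K3 surface in the sense of Section~\ref{sect-action-on-K3}, i.e. that $\mathrm{H}^1(\OOO_S)=0$. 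I would read this off the ideal-sheaf sequence
\[
0\to\OOO_X(K_X)\to\OOO_X\to\OOO_S\to 0,
\]
using $\OOO_X(-S)=\OOO_X(K_X)=\omega_X$: as $X$ is Fano with rational (terminal) singularities we have $\mathrm{H}^1(\OOO_X)=0$, while Grothendieck duality on the Cohen--Macaulay variety $X$ gives $\mathrm{H}^2(\omega_X)\cong\mathrm{H}^1(\OOO_X)^\vee=0$, whence $\mathrm{H}^1(\OOO_S)=0$. Thus $S$ is a K3 surface and Proposition~\ref{canonical-K3} yields $r(G')\le 2$, again a contradiction. Either way the setting of~\ref{setting-special-case} is impossible.

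The crux of the argument is the clean dichotomy on $(S^\nu,\Delta)$ together with the vanishing $\mathrm{H}^1(\OOO_S)=0$. The latter is precisely what excludes the \emph{a priori} possible canonical Calabi--Yau surface of positive irregularity, namely an abelian surface, forcing the $1$-lc case to be genuinely K3 and hence subject to the sharp bound $r\le 2$; the remaining non-canonical and irregular possibilities (including surfaces ruled over an elliptic curve) are absorbed into Proposition~\ref{non-canonical-logCY}. One must also verify that the rank drops by at most one, so that $r(G')\ge 4$ indeed surpasses every bound available in Section~\ref{sect-action-on-K3}.
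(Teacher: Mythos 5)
Your proof is correct and follows essentially the same route as the paper: pass to the normalization of the (irreducible, reduced) anticanonical surface $S$, introduce the different $\Delta$, and split into the canonical K3 case (handled by Proposition~\ref{canonical-K3} via the ideal-sheaf sequence giving $h^1(\OOO_S)=0$) and the non-$1$-lc log Calabi--Yau case (handled by Proposition~\ref{non-canonical-logCY}). The only difference is that you spell out two details the paper leaves implicit --- the cyclicity of the kernel of $G\to\mathrm{Aut}(S)$ and the vanishing $\mathrm{H}^1(\OOO_X)=\mathrm{H}^2(\omega_X)=0$ --- both of which are correct.
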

\begin{proof}
By Proposition \ref{S-reduced-irreducible} we may assume that $S$ is reduced and irreducible. Considering the action of $G$ on the tangent space to the generic point of $S$, we see that a $p$-group $G'$ of rank $4$ faithfully acts on $S$. Let $\nu: S'\to S$ be the normalization of $S$. Consider the pair $(S', \Delta')$ defined by the formula
\[
K_{S'} + \Delta' = \nu^* K_S \sim 0.
\]
Here $\Delta'$ is an effective Weil $\mathbb{Q}$-divisor called the \emph{different}, see \cite{Ka07}. Moreover, $\nu(\Delta')$ is supported on the non-normal locus of $S$, the pair $(S', \Delta')$ is lc, and $\Delta'$ is $G'$-invariant. If $\Delta' = 0$ and $S'=S$ is canonical, we apply Proposition \ref{canonical-K3}. Indeed, we have $K_S \sim 0$ by adjunction, and from the exact sequence
\[
0 \to \OOO_X(K_X) \to \OOO_X \to \OOO_S \to 0
\]
it follows that $h^1(\OOO_S) = 0$, so $S$ is a K3 surface with at worst canonical singularities. By Proposition \ref{canonical-K3} we conclude that $r(G')\leq 2$, hence $r(G)\leq 3$. If either $\Delta'\neq 0$ or $\Delta' = 0$ and $S'=S$ is not canonical then we apply Proposition \ref{non-canonical-logCY} and obtain $r(G')\leq 3$, so $r(G)\leq 4$. This contradicts to the assumption $r(G)=5$, and the proposition if proven. 
\end{proof}

\section{$G$-action on threefolds. General case}
\label{sect-general-case}
In this section, we work in the following setting. Let $X$ be a terminal $G\mathbb{Q}$-factorial Fano threefold where $G$ is a $3$-group of rank $r(G)=5$, and $(-K_X)^3>1/2$. 

Using Proposition \ref{reduction-to-abelian}, we will assume that $G$ is abelian. Then by Proposition \ref{rank-5-dichotomy} we have $\dim |-K_X|\geq 1$, and since a representation of $G$ in $\mathrm{H}^0(X, \OOO(-K_X))$ splits a direct sum of $1$-dimensional representations, we see that $|-K_X|$ has a $G$-invariant linear subsystem $\mathscr{H}$ of dimension $2$. We will need the following simple lemma. 

\begin{lem}
\label{lem-action-on-MFS}
Assume that a $p$-group of rank $(\mathbb{Z}/p)^r$ for $p>2$ faithfully acts on 
a $3$-dimensional rationally connected $G\mathbb{Q}$-Mori fiber space $X\to Z$ with the base of positive dimension. Then $r(G)\leq 3 + \delta_{p, 3}$.
\end{lem}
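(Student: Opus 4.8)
The plan is to split the action of $G$ into the part seen by the base $Z$ and the part acting along the fibers, and to bound each using the surface and curve results already at our disposal. Since $X \to Z$ is a $3$-dimensional $G\mathbb{Q}$-Mori fiber space with $\dim Z > 0$, we have $\dim Z \in \{1,2\}$. Set $G_Z := \mathrm{im}(G \to \mathrm{Aut}(Z))$ and $G_F := \ker(G \to \mathrm{Aut}(Z))$, so that $G_F$ acts trivially on $Z$ and preserves every fiber, while $G_Z$ acts faithfully on $Z$. As $G_F \trianglelefteq G$ with $G/G_F \cong G_Z$, lifting generators of $G_Z$ and adjoining generators of $G_F$ shows $r(G) \leq r(G_F) + r(G_Z)$. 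I would then bound the two ranks separately and add.

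First I would bound $r(G_Z)$. The base $Z$ is normal and dominated by the rationally connected threefold $X$, hence rationally connected, so $Z$ is rational. If $\dim Z = 1$, then $Z \cong \mathbb{P}^1$ and $G_Z$ is a $p$-subgroup of $\mathrm{PGL}_2(\mathbb{C})$; since every finite $p$-subgroup of $\mathrm{PGL}_2(\mathbb{C})$ with $p$ odd is cyclic, $r(G_Z) \leq 1$. If $\dim Z = 2$, then a $G_Z$-equivariant resolution turns $Z$ into a smooth rational surface carrying a faithful $G_Z$-action, so $G_Z \subset \mathrm{Bir}(Z) = \mathrm{Cr}_2(\mathbb{C})$ and Theorem \ref{thm-surfaces} gives $r(G_Z) \leq 2 + \delta_{p,3}$.

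Next I would bound $r(G_F)$ via the action on a general fiber. The essential point is that $G_F$ acts faithfully on a general fiber $F$: for each nontrivial $g \in G_F$ the fixed locus $\mathrm{Fix}(g) \subsetneq X$ is proper closed (as $g$ acts faithfully on $X$ but trivially on $Z$, it cannot fix the generic fiber pointwise), so the set of $t \in Z$ whose entire fiber lies in $\mathrm{Fix}(g)$ is proper closed in $Z$; discarding these finitely many loci for $g \in G_F \setminus \{1\}$ leaves a dense open set of fibers on which $G_F$ acts faithfully. By generic smoothness a general fiber $F$ is smooth. If $\dim Z = 2$ then $F \cong \mathbb{P}^1$, whence $r(G_F) \leq 1$ as above; if $\dim Z = 1$ then $F$ is a smooth del Pezzo surface, hence rational, and Theorem \ref{thm-surfaces} gives $r(G_F) \leq 2 + \delta_{p,3}$.

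Combining the estimates: when $\dim Z = 2$ we get $r(G) \leq (2 + \delta_{p,3}) + 1 = 3 + \delta_{p,3}$, and when $\dim Z = 1$ we get $r(G) \leq 1 + (2 + \delta_{p,3}) = 3 + \delta_{p,3}$, which proves the lemma in both cases. The main obstacle is precisely the faithfulness of $G_F$ on a general fiber — guaranteeing that no nontrivial element of $G_F$ happens to act trivially on the chosen fiber, even though it may fix certain special fibers pointwise — which is settled by the fixed-locus dimension argument above; the remaining input is just the application of the plane and line rank bounds.
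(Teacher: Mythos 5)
Your proof is correct and follows essentially the same route as the paper: the same exact sequence $1\to G_F\to G\to G_Z\to 1$, the same case split on $\dim Z$, and the same inputs (cyclicity of odd-order $p$-subgroups of $\mathrm{PGL}_2(\mathbb{C})$ and Theorem \ref{thm-surfaces} for the surface factor). The only difference is that you work with a general closed fiber and justify the faithfulness of $G_F$ on it, a point the paper asserts for the generic fiber without proof.
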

\begin{proof}
Consider an exact sequence
\[
1\to G_1 \to G \to G_2 \to1
\]
where $G_2$ acts faithfully on the base $Z$, and $G_1$ acts faithfully acts on the generic fiber of $X\to Z$. If $\dim Z = 1$ then $Z \simeq \mathbb{P}^1$, and $G_2\subset \mathbb{Z}/p$, so $G_1$ acts faithfully acts on a smooth geometrically rational surface and hence has rank $\leq 2 + \delta_{p,3}$ according to Theorem \ref{thm-surfaces}, so the claim follows. If $\dim Z = 2$, we argue analogously.
\end{proof}

The next proposition is the crucial technical tool in this paper.

\begin{proposition}[cf. {\cite[3.1]{Al94}, \cite[6.6]{Pr09}}]
\label{prop-main-reduction}
Assume that $X$ is a normal terminal threefold endowed with a faithful action of a finite group $G$, and let $\mathscr{H}$ be a non-empty $G$-invariant pencil (that is, a linear system of dimension $1$) on $X$ without fixed components, such that
\[
-K_X - \mathscr{H} \sim E
\]
where $E$ is an effective $\mathbb{Q}$-divisor. Assume that either
\begin{itemize}
\item
$E=0$, and the pair $(X, \mathscr{H})$ is not canonical, or
\item
$E \neq 0$,
\end{itemize}
Then $X$ is $G$-birationally equivalent to a $G\mathbb{Q}$-Mori fiber space with the base of positive dimension.
\end{proposition}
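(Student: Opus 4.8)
The plan is to apply the MMP for linear systems (following Alexeev and Prokhorov) to the pair $(X,\mathscr{H})$, run a $G$-equivariant version, and extract a fibration whose base has positive dimension. First I would pass to a $G$-equivariant log resolution $\phi\colon W\to X$ of the pair $(X,\mathscr{H})$ and write the log pullback in the form $K_W + \mathscr{H}_W + \sum a_i E_i = \phi^*(K_X + \mathscr{H})$, where $\mathscr{H}_W$ is the strict transform of the (movable) linear system. The hypotheses are engineered so that there is a prime divisor of strictly positive discrepancy contribution: either $E\neq 0$ supplies an effective piece of $-K_X-\mathscr{H}$, or $(X,\mathscr{H})$ fails to be canonical, which produces an exceptional divisor with $a_i>0$. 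In both cases one arranges, after a $G$-equivariant perturbation by a small multiple of the relevant effective divisor, that $K_X+\mathscr{H}$ (or a suitable $G$-invariant modification) is \emph{not} nef.

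The key step is then to run a $G$-equivariant $K_X+\mathscr{H}$-MMP. Since $\rho^G$ drops at each step and $X$ is rationally connected, the program terminates with a $G\mathbb{Q}$-Mori fiber space structure $X'\to Z$ for the strict transform $(X',\mathscr{H}')$ of our pair. The crucial point, which is exactly the content of the cited results \cite[3.1]{Al94} and \cite[6.6]{Pr09}, is that because $\mathscr{H}$ is a \emph{movable} pencil without fixed components, its strict transform $\mathscr{H}'$ is still movable and nonzero; hence $\mathscr{H}'$ cannot be contracted to a point on the Mori fiber space. This forces the extremal contraction $X'\to Z$ to have positive-dimensional base $Z$: a general member of $\mathscr{H}'$ must dominate $Z$, so $\dim Z\geq 1$.

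The main obstacle I anticipate is controlling the behavior of $\mathscr{H}$ along the MMP and verifying that the fibration produced genuinely has $\dim Z>0$ rather than collapsing $X'$ to a point (the $G\mathbb{Q}$-Fano case). This is where the two hypotheses do the real work: the condition that $(X,\mathscr{H})$ is non-canonical (when $E=0$) or that $E\neq 0$ ensures that $K_X+\mathscr{H}$ is, up to the effective divisor $E$, already as negative as $-K_X$ but with a movable part carried along, so that the log canonical threshold computation forces the existence of a $G$-invariant fibration rather than a birational contraction to a point. Making the perturbation argument $G$-equivariant and ensuring that the chosen boundary remains $G$-invariant throughout (so that every extremal ray one contracts is defined over the $G$-action) is the technical heart; once the fibration $X'\to Z$ with $\dim Z\geq 1$ is in hand, the conclusion that $X$ is $G$-birational to a $G\mathbb{Q}$-Mori fiber space with positive-dimensional base is immediate.
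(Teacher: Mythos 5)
Your setup (equivariant terminalization/resolution, running a $G$-equivariant $(K+\mathscr{H})$-MMP, reducing the non-canonical case to the case of a nonzero effective remainder) matches the paper's strategy, but the final step contains a genuine gap. You claim that because $\mathscr{H}$ is a movable pencil that cannot be contracted, the Mori fiber space $\overline{X}\to Z$ produced by the MMP must have $\dim Z\geq 1$. This does not follow: the MMP can perfectly well terminate with $Z$ a point, i.e.\ with a $G\mathbb{Q}$-Fano variety $\overline{X}$ carrying the surviving movable pencil $\overline{\mathscr{H}}$ (think of a pencil of hyperplane sections on $\mathbb{P}^3$). The survival of the pencil says nothing about the dimension of the base of the extremal contraction, and ``a general member must dominate $Z$'' is vacuous when $Z$ is a point. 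This is precisely the case where all the real work in the paper's proof happens, and your proposal dismisses it.

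The paper's resolution of this case is the part you are missing. When the MMP ends with $Z$ a point, one has $\rho^G(\overline{X})=1$ and $K_{\overline{X}}+\overline{\mathscr{H}}+\overline{E}\sim 0$ with $\overline{E}\neq 0$ (in the first case this nonvanishing is supplied by \cite[3.1]{Al94} applied to the exceptional divisor of the terminalization at canonical threshold $c<1$; in the second case by the negativity lemma, which prevents $E'$ from being contracted). Since $\rho^G=1$, this forces $-K_{\overline{X}}\sim a\overline{\mathscr{H}}$ with $a>1$; terminality of the pair $(\overline{X},\overline{\mathscr{H}})$ together with \cite[1.22]{Al94} then shows the general member $\overline{H}$ is a smooth del Pezzo surface by adjunction. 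One then uses the pencil $\overline{\mathscr{H}}$ itself to define a rational map $\overline{X}\dashrightarrow\mathbb{P}^1$, resolves its base locus equivariantly, and runs a relative $G$-MMP \emph{over} $\mathbb{P}^1$ to manufacture the desired $G\mathbb{Q}$-Mori fiber space with positive-dimensional base. Without this construction the conclusion simply does not follow from the MMP alone, so you should add this argument (or an equivalent one) to handle the Fano outcome of the MMP.
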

\begin{proof}
We start with the first case. Assume that the linear system $\mathscr{H}$ does not have fixed components, and that the pair $(X, \mathscr{H})$ is not canonical. Let $c$ be its canonical threshold, that is, $c$ is maximal with the property that $(X, c \mathscr{H})$ is canonical. Clearly, $0<c<1$. Let $f: (X', c \mathscr{H}')\to(X, c \mathscr{H})$ be a $G$-equivariant $G\mathbb{Q}$-factorial terminalization. We have 
\[
K_{X'}+c \mathscr{H}' = f^*(K_X+c \mathscr{H}),
\]
\[ 
K_{X'}+\mathscr{H}' + E = f^*(K_X+\mathscr{H})\sim 0,
\]
where $E$ is a non-zero effective integral (since $\mathscr{H}\sim -K_X$) $f$-exceptional $\mathbb{Q}$-divisor. Note that $E$ may be reducible. Since $-K_X-c\mathscr{H}$ is ample, we have that $-K_{X'}-c\mathscr{H}'$ is big and nef. We run a $G$-equivariant  $(K_{X'}+c \mathscr{H}')$-MMP and obtain a $G\mathbb{Q}$-Mori fiber space $(\overline{X}, \overline{\mathscr{H}})\to Z$. 

Note that $\overline{X}$ is terminal. Indeed, we show that each step of $(X', c\mathscr{H}')$-MMP is also a step of the usual $K_{X'}$-MMP. This follows from the fact that on the terminalization $X'$ the linear system $\overline{\mathscr{H}}$ does not contain curves in its base locus \cite[1.22]{Al94}, and hence it is nef. On each step, the pair remains terminal, so its base locus does not contain curves, and hence the linear system remains nef, so the same argument applies on each step of the MMP.

We may assume that $Z$ is a point, otherwise the proposition is proven. So $\rho^G(\overline{X})=1$, and we have 
\[
K_{\overline{X}} + \overline{\mathscr{H}} + \overline{E} \sim 0, 
\]

From \cite[3.1]{Al94} it follows that $\overline{E}\neq 0$. Then 
\[
- K - \overline{\mathscr{H}} \sim \overline{E},
\]
and we have reduced the first case of the lemma to the second. 

\

Now we consider the second case, so we assume that $E\neq 0$. Pass to a $G\mathbb{Q}$-factorial terminalization $f: (X', \mathscr{H}')\to (X, \mathscr{H})$. Write 
\[
K_{X'}+\mathscr{H}'+E' = f^* ( K_X + \mathscr{H} + E ) \sim 0
\]
where $E' \neq 0$. 
Apply a $G$-MMP to the pair $(X', \mathscr{H}')$. On each step of this MMP we contract a $(K_{X'} + \mathscr{H}')$-negative extremal ray. But $-(K_{X'} + \mathscr{H}') \sim E'$, hence $E'$ cannot be contracted by the negativity lemma. Hence the $G$-MMP terminates with a $G\mathbb{Q}$-Mori fiber space $\overline{X}\to Z$. If the dimension of $Z$ is positive, the proposition is proven. So we assume that $Z$ is a point, $\rho^G(\overline{X})=1$, and 
\[
K_{\overline{X}} + \overline{\mathscr{H}} + \overline{E} \sim 0.
\]
Write $-K_{\overline{X}}\sim a \overline{\mathscr{H}}$ for some $a>1$. As explained above, the pair $(\overline{X}, \overline{\mathscr{H}})$ is terminal. Hence by \cite[1.22]{Al94} the general member $\overline{H} \in \overline{\mathscr{H}}$ is smooth and contained in the smooth locus of $\overline{X}$. By adjunction formula we obtain 
\[
K_{\overline{H}} = ( K_{\overline{X}} + \overline{H} ) |_{\overline{H}} = ( a - 1 ) \overline{H}|_{\overline{H}},
\]
hence $\overline{H}$ is a smooth del Pezzo surface. Note that the linear system $\overline{\mathscr{H}}$ defines a rational map $\overline{X}\dashrightarrow \mathbb{P}^1$. Resolve its base locus of $\overline{\mathscr{H}}$ and run a relative $G$-MMP over $\mathbb{P}^1$ to obtain a Mori fiber space with the base of positive dimension.
\end{proof}


\begin{corollary}
\label{cor-canonical}
Let $X$ be a $3$-dimensional $G\mathbb{Q}$-Fano variety that admits a $G$-invariant anti-canonical pencil, that is, a $G$-invariant linear system $\mathscr{H}$ such that
$\dim \mathscr{H}= 1$, and 
$\mathscr{H}\subset |-K_X|$. 
Then either
\begin{itemize}
\item
$X$ is $G$-birationally equivalent to a $G\mathbb{Q}$-Mori fiber space with the base of positive dimension, or
\item
the pair $(X, \mathscr{H})$ is canonical, the linear system $\mathscr{H}$ does not have fixed components, and 
$X$ is $G$-birationally equivalent to a $G$-equivariant fibration over $\mathbb{P}^1$ whose general fiber is a smooth K3 surface.
\end{itemize} 
\end{corollary}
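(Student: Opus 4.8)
The plan is to reduce almost everything to Proposition \ref{prop-main-reduction} by decomposing the pencil into its fixed and movable parts, and then to handle the one remaining case by hand. First I would write $\mathscr{H} = \mathscr{M} + F$, where $F \geq 0$ is the fixed part and $\mathscr{M}$ is the movable part (still a pencil, now without fixed components). Since the fixed/movable decomposition is intrinsic and $G$ preserves $\mathscr{H}$, both $F$ and $\mathscr{M}$ are $G$-invariant. Because $\mathscr{H}\subset|-K_X|$, any member $H\in\mathscr{H}$ satisfies $H\sim -K_X$, so $\mathscr{M}\sim -K_X - F$, i.e. $-K_X-\mathscr{M}\sim F$ with $F$ effective. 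Recall that $X$ is normal and terminal. If $F\neq 0$, then applying Proposition \ref{prop-main-reduction} to the movable pencil $\mathscr{M}$ (second case, with $E=F\neq 0$) shows that $X$ is $G$-birational to a $G\mathbb{Q}$-Mori fiber space with base of positive dimension, which is the first alternative.

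Now suppose $F=0$, so $\mathscr{H}$ has no fixed components and $-K_X-\mathscr{H}\sim 0$. If $(X,\mathscr{H})$ is not canonical, Proposition \ref{prop-main-reduction} applies in its first case ($E=0$, not canonical) and again yields the first alternative. Hence I may assume that $(X,\mathscr{H})$ is canonical with no fixed components, and it remains to build the K3 fibration of the second alternative. The pencil defines a $G$-equivariant rational map $X\bir\mathbb{P}^1$ (with $G$ acting linearly on the $2$-dimensional space spanning $\mathscr{H}$); resolving its base locus $G$-equivariantly \cite{AW97} and taking Stein factorization, I obtain a $G$-equivariant fibration $g\colon \tilde X\to B$ over a curve with connected fibers, whose general fiber is birational to the general member $H$ of $\mathscr{H}$. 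By adjunction $K_H=(K_X+H)|_H\sim 0$ since $H\sim -K_X$, and the exact sequence
\[
0\to\OOO_X(K_X)\to\OOO_X\to\OOO_H\to 0,
\]
together with $\mathrm{H}^1(\OOO_X)=0$ and $\mathrm{H}^2(\OOO_X(K_X))\cong \mathrm{H}^1(\OOO_X)^*=0$, gives $h^1(\OOO_H)=0$; as $(X,\mathscr{H})$ is canonical the general $H$ has at worst canonical singularities, so $H$ is an irreducible K3 surface. Irreducibility of $H$ forces $B\cong\mathbb{P}^1$. Finally, running a relative $G$-MMP for $\tilde X$ over $\mathbb{P}^1$ contracts the $(-1)$-curves in the general fiber and terminates (the fibers have nonnegative Kodaira dimension, so no fiber-type contraction to a smaller base occurs) with a relatively minimal model whose general fiber is the smooth minimal model of $H$, a smooth K3 surface. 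This is the required fibration.

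The genuinely delicate points, rather than the formal case-split, are the geometric claims in the canonical case: verifying that a general member of the canonical anti-canonical pencil is an \emph{irreducible} K3 surface with canonical singularities (which needs the adjunction input above together with a Bertini-type argument, and is exactly what pins the base down to $\mathbb{P}^1$), and checking that the relative MMP over $\mathbb{P}^1$ really stabilizes with smooth K3 general fibers rather than degenerating. Keeping every step $G$-equivariant is routine given equivariant resolution and equivariant relative MMP, but it must be stated. I expect the K3-fiber verification to be the main thing to get right.
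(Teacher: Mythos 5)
Your reduction to Proposition \ref{prop-main-reduction} --- splitting $\mathscr{H}$ into its movable and fixed parts, feeding the fixed part in as the effective divisor $E$, and invoking the non-canonical case when $E=0$ --- is precisely the (implicit) first half of the paper's argument, and it is correct. Where you diverge is in producing the smooth K3 fibration in the remaining canonical case. The paper passes to a $G\mathbb{Q}$-factorial terminalization $(X',\mathscr{H}')$ of the canonical pair, quotes \cite[1.22--1.23]{Al94} to see that $\mathscr{H}'$ can have at most isolated, reduced base points lying in the smooth locus of $X'$, and then eliminates these by the elementary observation that two surfaces through a smooth point of a threefold cannot meet only in that point; hence $\mathscr{H}'$ is base point free and its general member is already a smooth K3. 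You instead take an arbitrary equivariant resolution of the base locus and run a relative $G$-MMP over $\mathbb{P}^1$, recovering the smooth K3 as the general fiber of the relative minimal model. Both routes work: yours trades the precise control of base loci of terminal pairs for standard MMP machinery, and you correctly record the two facts that make it go through (no fiber-type contraction over $\mathbb{P}^1$ can occur because the general fiber has Kodaira dimension $0$, and the general fiber of a terminal relative minimal model over a curve is smooth since terminal threefold singularities are isolated). On the one point you flag as genuinely delicate --- irreducibility of the general member $H$ --- your own exact sequence already settles it: besides $h^1(\OOO_H)=0$ it gives $h^0(\OOO_H)=1$, using $H^0(X,K_X)=0$ and $H^1(X,K_X)\cong H^2(X,\OOO_X)^{*}=0$ by Kawamata--Viehweg vanishing, so $H$ is connected; a connected surface with canonical singularities is normal and hence irreducible, which is also what forces the Stein factorization base to be $\mathbb{P}^1$. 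It would be worth writing that out, since it is the step the paper handles by a different (base-point-theoretic) mechanism.
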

\begin{proof}
By Proposition \ref{prop-main-reduction}, we may assume that the pair $(X, \mathscr{H})$ is canonical and does not have fixed components. 
 
We resolve indeterminacy of the rational map $X \dashrightarrow \mathbb{P}^1$ given by $\mathscr{H}$. Consider a terminalization $(X', \mathscr{H}')\to(X, \mathscr{H})$. Then the linear system $\mathscr{H}'$ is basepoint free.
We have
\[
K_{X'} + \mathscr{H}' = f^* ( K_X + \mathscr{H} ) \sim 0,
\] 
and the pair $(X', \mathscr{H}')$ is terminal. From \cite[1.23]{Al94} it follows that $\mathscr{H}'$ may have only isolated basepoints (that are non-singular on $X'$) such that the multiplicity of a g.e. in $\mathscr{H}'$ at them equals $1$. We claim that $\mathscr{H}'$ cannot have isolated basepoints. Indeed, since $\mathscr{H}'$ is generated by two elements that  are strict preimages of $D_1$, $D_2$ on $X$, the base locus of $\mathscr{H}'$ coincides with their intersection. But since the base locus lives in the non-singular locus of $X'$, two surfaces cannot intersect in a point there, so the base locus if $\mathscr{H}'$ is empty. Hence the general element of $\mathscr{H}'$ is a smooth K3 surface, and the corollary is proven.
\end{proof}

Hence we obtain a $G$-morphism $X'\to \mathbb{P}^1$. But then a group of rank $r(G)-1=4$ faithfully acts on a general fiber which is a smooth K3 surface. This contradicts to Proposition \ref{canonical-K3}.

Now, we summarise the obtained results to prove Theorem \ref{main-theorem}. 

\section{Proof of the main results}

\begin{proof}[Proof of Theorem \ref{main-theorem}]
Assume that $G$ is a $3$-group of rank $\geq 5$ that faithfully acts on a rationally connected $3$-dimensional variety $Y$. By Proposition \ref{reduction-to-abelian} we may assume that $G$ is abelian. Passing to a $G$-equivariant resolution of singularities and running a $G$-MMP, we may assume that $G$ faithfully acts on a $G\mathbb{Q}$-Mori fiber space $X\to Z$. From Lemma \ref{lem-action-on-MFS} we deduce that $Z$ is a point, so $X$ is a $3$-dimensional $G\mathbb{Q}$-factorial Fano variety with $\rho^G(X)=1$.

By Proposition \ref{rank-5-dichotomy} either $(-K_X)^3=1/2$ and $\dim |-K_X| = 0$, or $\dim |-K_X|\geq 1$. We show that the first case does not occur. For a unique element $S\in |-K_X|$, by Proposition \ref{the-pair-is-lc} we may assume that the pair $(X, S)$ is lc. Then, by Proposition \ref{S-reduced-irreducible} the divisor $S$ is reduced and irreducible. Proposition \ref{special-case-does-not-occur} shows that $S$ is a K3 surface with at worst canonical singularities, but then a $3$-group of rank $r(G)-1\geq 4$ cannot faithfully act on $S$ by Proposition \ref{non-canonical-logCY}.

So we see that $\dim |-K_X|\geq 1$. Hence there exists a $G$-invariant pencil $\mathscr{H}$, that is, a $G$-invariant subsystem of dimension $1$, of anti-canonical elements. We consider the pair $(X, \mathscr{H})$. 
Applying Proposition \ref{prop-main-reduction} and Lemma \ref{lem-action-on-MFS} we deduce that $\mathscr{H}$ does not have base components and that the pair $(X, \mathscr{H})$ is canonical. But then by Corollary \ref{cor-canonical} we have that $X$ is $G$-birational to a fibration over $\mathbb{P}^1$ whose general fiber is a smooth K3 surface. A group of rank $r(G)-1=4$ faithfully acts on the general fiber which contradicts to Proposition \ref{canonical-K3}. 

This proves that $r(G)\leq 4$. The fact that this bound is sharp follows from Example \ref{exam-3-sharp}.
\end{proof}

\begin{proof}[Proof of Corollary \ref{main-corollary}]
Follows from Theorem \ref{main-theorem} and the fact that the variety $X$ in Example \ref{exam-3-sharp} is rational.
\end{proof}

\

\Addresses

\end{document}